\DeclareMathAlphabet{\mathcal}{LS1}{stixscr}{m}{n}
\newcommand{\A}{\mathcal A}
\newcommand{\F}{\mathcal F}
\newcommand{\I}{\mathcal I}
\newcommand{\J}{\mathcal J}
\newcommand{\U}{\mathcal U}
\newcommand{\C}{\mathcal C}
\newcommand{\V}{\mathcal V}
\newcommand{\s}{\mathcal S}
\newcommand{\M}{\mathcal M}
\newcommand{\e}{\varepsilon}
\newcommand{\w}{\omega}
\newcommand{\diam}{\mathrm{diam}}
\newcommand{\IN}{\mathbb N}
\newcommand{\IB}{\mathbb B}
\newcommand{\IR}{\mathbb R}
\newcommand{\strategy}{\mbox{\rm\texteuro}}
\newcommand{\Ra}{\Rightarrow}
\newcommand{\se}{\subseteq}
\newcommand{\defeq}{\overset{\mbox{\tiny\sf def}}=}
\newtheorem{theorem}{Theorem}[section]
\newtheorem{lemma}[theorem]{Lemma}
\newtheorem{claim}[theorem]{Claim}
\newtheorem{corollary}[theorem]{Corollary}
\newtheorem{proposition}[theorem]{Proposition}
\newtheorem{problem}[theorem]{Problem}
\theoremstyle{definition}
\newtheorem{definition}[theorem]{Definition}
\newtheorem{remark}[theorem]{Remark}
\newtheorem{example}[theorem]{Example}
\title{The Set-Cover game and non-measurable unions}
\author{Taras Banakh, Robert Ra\l owski, Szymon \.Zeberski}
\address{T.Banakh: Ivan Franko National University of Lviv (Ukraine) and\newline Jan Kochanowski University in Kielce (Poland)}
\email{t.o.banakh@gmail.com}
\address{R.Ra\l owski, S.\.Zeberski: Wroc\l aw University of Science and Technology,\newline Wybrze\.ze S.~Wyspia\'nskiego 27, 50-370 Wrocław}
\email{robert.ralowski@pwr.edu.pl, szymon.zeberski@pwr.edu.pl}
\thanks{The work of R. Rałowski and Sz. Żeberski has been partially financed by grant { 8211204601, MPK: 9120730000} from the Faculty of Pure and Applied Mathematics, Wrocław University of Science and Technology.}
\keywords{Set-Cover game, non-measurable set, Polishable family, Martin-Burstin representation, tree ideal, weak Steinhaus property, automatic continuity }
\subjclass[2010]{03E75, 03E15, 28A05}
\begin{document}

\begin{abstract} Using a game-theoretic approach we present a generalization of the classical result of Brzuchowski, Cicho\'n, Grzegorek and Ryll-Nardzewski on non-measurable unions.  We also present applications of obtained results to Marczewski--Burstin representable ideals.
\end{abstract}
\maketitle

\section{Introduction and selected results}

In this paper we generalize the following classical result on non-measurable unions, proved by Brzuchowski, Cicho\'n, Grzegorek and Ryll-Nardzewski in \cite{BCGRN}.

\begin{theorem}\label{t:BCGRN} For any $\sigma$-ideal $\I$ with a Borel base on a Polish space $X$,  any point-finite family $\J\subseteq\I$ with $\bigcup\J\notin\I$ contains a subfamily $\J'\subseteq \J$ whose union $\bigcup\J'$ does not belong to the $\sigma$-algebra generated by Borel sets in $X$ and sets in the ideal $\I$.
\end{theorem}

A family of sets $\J$ is {\em point-finite} if $\{J\in\J: x\in J\}$ is finite for every $x\in \bigcup\J$. 

In this paper we present a generalization of Theorem~\ref{t:BCGRN} with a game-theoretic proof, which will allow us establish various additional non-measurability properties of the union $\bigcup\J'$.
Our proof exploits the Set-Cover game defined as follows.

Let $\I$ be a $\sigma$-ideal on a set $X$ and $\A$ be a multiplicative family of subsets of $X$ such that $X\in\A\setminus\I$. The multiplicativity of $\A$ means that $A\cap B\in\A$ for any sets $A,B\in\A$. 

The Set-Cover game $\Game_{\A{\setminus}\I\!,\A}$ is played by two players $\mathsf S$ and $\mathsf C$ (abbreviated from {\sf Set} and {\sf Cover}). Player $\mathsf S$ starts the game selecting a set $S_1\in\A\setminus\I$. Player $\mathsf C$ answers with a countable cover $\C_1\subseteq\A$ of the set $S_1$. At the $n$th inning Player $\mathsf S$ chooses a set $S_n\in\A\setminus\I$ such that $S_n$ is contained in some set of the cover $\mathcal C_{n-1}$ of $S_{n-1}$ and Player $\mathsf C$ answers with a countable cover $\mathcal C_n\subseteq\A$ of the set $S_n$. 
At the end of the game Player $\mathsf C$ is declared the winner of the Set-Cover game $\Game_{\A{\setminus}\I\!,\A}$ if the intersection $\bigcap_{n\in\w}S_n$ is not empty. In the opposite case, Player $\mathsf S$ wins the game ${\mathbf\Game}_{\A{\setminus}\I\!,\A}$.

The family $\A$ is called {\em $\I$-winning} if Player $\mathsf C$ has a winning strategy in the Set-Cover game  ${\mathbf\Game}_{\A{\setminus}\I\!,\A}$. 
 A set $A\subseteq X$ is called {\em $\I$-positive} if $A\notin\I$.
\smallskip

One of main results of this paper is the following theorem.

\begin{theorem}\label{t:main1} Let $\I$ be a $\sigma$-ideal on a set $X$ of cardinality $|X|\le\mathfrak c$ and $\A$ be an $\I$-winning multiplicative family of subsets of $X\in\A\setminus\I$. Every point-finite family $\J\subseteq\I$ with $\bigcup\J\notin\I$ contains a subfamily $\J'\subseteq\J$ such that for some set $A\in\A$ the intersection $A\cap\bigcup\J'$ is $\I$-positive but no subset of $A\cap\bigcup\J'$ belongs to the family $\A\setminus\I$.
\end{theorem}

Theorem~\ref{t:main1} motivates the problem of recognizing $\I$-winning families $\A$. Important examples of such families are Polishable families, defined as follows.

\begin{definition} A family $\A$ of subsets of a topological space is {\em Polishable} if for every $A\in\A$ there exist a zero-dimensional Polish space $P$ and a continuous surjective map 
$f:P\to A$ such that $\{f[B]:B\in\mathcal B\}\subseteq\A$ for some base $\mathcal B$ of the topology of $P$.
\end{definition}

The following theorem will be proved in Section~\ref{s:Polishable}.

\begin{theorem}\label{t:Polish} Every Polishable multiplicative  family $\A$ of subsets of a Hausdorff space $X$ is $\I$-winning for any ideal $\I$ on $X$.
\end{theorem}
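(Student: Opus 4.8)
My plan is to exhibit an explicit winning strategy for the player $\mathsf{C}$, whose essential idea is to replace the single Polish representation of each set played by $\mathsf{S}$ with a growing \emph{fibre product} of all representations seen so far, and then to force a point into the intersection by shrinking the basic sets used to build the covers. Starting from $R_1=P_1$, suppose $\mathsf{S}$ has played $S_1,\dots,S_n\in\A\setminus\I$, and let $f_i\colon P_i\to S_i$ be the continuous surjections with bases $\mathcal B_i$, $f_i[\mathcal B_i]\se\A$, given by Polishability, where $P_i$ carries a complete (bounded) metric $d_i$. I would work in
\[
R_n=\{(p_1,\dots,p_n)\in P_1\times\dots\times P_n: f_1(p_1)=\dots=f_n(p_n)\},
\]
with the maximum metric $\rho_n=\max_{i\le n}d_i$ and the map $g_n(p_1,\dots,p_n)=f_n(p_n)$ onto $S_n$. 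Here the first hypothesis enters: since $X$ is Hausdorff, each condition $f_i(p_i)=f_j(p_j)$ defines a closed subset of the Polish space $\prod_{i\le n}P_i$, so every $R_n$ is Polish and complete. The basic open sets of $R_n$ have the form $(\prod_{i\le n}U_i)\cap R_n$ with $U_i\in\mathcal B_i$, and their $g_n$-images are $\bigcap_{i\le n}f_i[U_i]$; this is where multiplicativity of $\A$ is used, guaranteeing that every set $\mathsf{C}$ plays remains in $\A$. Player $\mathsf{C}$ then covers $S_n$ by the $g_n$-images of countably many (using that Polish spaces are Lindel\"of) such basic sets of $\rho_n$-diameter $<2^{-n}$.

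The key bookkeeping is to keep these sets nested along the projections $\pi_n\colon R_{n+1}\to R_n$ that forget the last coordinate. When $\mathsf{S}$ answers with $S_n\se g_{n-1}[O_{n-1}]$, where $O_{n-1}\se R_{n-1}$ is the basic set $\mathsf{C}$ had used, I would have $\mathsf{C}$ cover $S_n$ only by images of basic sets of $R_n$ contained in $\pi_{n-1}^{-1}(O_{n-1})$. A short argument using $S_n\se g_{n-1}[O_{n-1}]$ together with surjectivity of $f_n$ shows $S_n\se g_n[\pi_{n-1}^{-1}(O_{n-1})]$, so such a cover indeed exists. Consequently the basic sets $O_n\se R_n$ produced along a run satisfy $\pi_n[O_{n+1}]\se O_n$ and $\diam_{\rho_n}(O_n)<2^{-n}$, whence $\pi_n[O_m]\se O_n$ for all $m\ge n$.

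Finally I would extract the winning point. Choosing any $r_m\in O_m$ and writing $r_m=(p_1^{(m)},\dots,p_m^{(m)})$, the nesting gives $\pi_n(r_m)\in O_n$ for $m\ge n$; since $\diam_{\rho_n}(O_n)<2^{-n}$ simultaneously controls \emph{all} coordinates $i\le n$, each sequence $(p_i^{(m)})_m$ is Cauchy in the complete space $P_i$, with some limit $p_i^*$. Continuity of the $f_i$ together with Hausdorffness of $X$ then forces the common value $x^*=f_i(p_i^*)$ to be independent of $i$, so that $x^*\in f_n[P_n]=S_n$ for every $n$ and $\bigcap_n S_n\ne\emptyset$, i.e. $\mathsf{C}$ wins.

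I expect the convergence step to be the main obstacle: the sets $S_n$ form a decreasing sequence that need not shrink in $X$ (which is only Hausdorff), and confining finitely many coordinates to \emph{fixed} small sets does not make diameters tend to $0$, so a naive product or inverse-limit argument is circular. The device that overcomes this is the fibre product combined with the requirement that each $O_n$ be small in the \emph{full} metric $\rho_n$, so that descending to deeper sets pins down every earlier coordinate at once. The technical heart of the proof will therefore be verifying that $\mathsf{C}$ can always realize such covers while keeping every played set inside $\A$ (via multiplicativity) and inside $\pi_{n-1}^{-1}(O_{n-1})$ (to preserve the nesting).
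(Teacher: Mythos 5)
Your proof is correct and is essentially the paper's own argument: the paper likewise covers $S_n$ by sets $S_n\cap\bigcap_{i\le n}f_{S_i}[B_i]$ with each $B_i$ a basic set of diameter $<2^{-n}$ nested inside the previously chosen one, and concludes via completeness of the $P_{S_i}$ and Hausdorffness of $X$; your fibre products $R_n$ with the max metric are just a repackaging of the paper's tuples $\Phi(S_1,\dots,S_n)\in\prod_i\mathcal B_{n,S_i}$. The only (harmless) difference is that you extract the limit points by a Cauchy-sequence argument where the paper imposes nested closures and intersects them.
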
 

In Section~\ref{s:Polishable} we shall prove that for any Polish space $X$ the descriptive classes $\mathbf\Sigma^1_1(X)$, $\mathbf\Delta^1_1(X)$, and $\mathbf\Sigma^0_\alpha(X)$,  $\mathbf\Pi^0_\alpha(X)$, $\mathbf\Delta^0_\alpha(X)$ for $2\le\alpha<\w_1$ are Polishable.

Applying Theorems~\ref{t:main1} and \ref{t:Polish} to the Polishable family $\mathbf\Delta^1_1(X)$ of Borel sets in a Polish space $X$, we obtain the following improvement of Theorem~\ref{t:BCGRN}.

\begin{theorem}\label{t:main-B} Let $\I$ be a $\sigma$-ideal on a Polish space $X$. Any point-finite family $\J\subseteq\I$ with $\bigcup\J\notin\I$  contains a subfamily $\J'\subseteq\J$ such that for some Borel set $B$ in $X$ the set $B\cap\bigcup\J'$ is $\I$-positive but contains no $\I$-positive Borel subsets of $X$. 
\end{theorem}

Applying Theorems~\ref{t:main1} and \ref{t:Polish} to the Polishable class $\mathbf\Sigma^1_1(X)$ of analytic sets in a Polish space $X$, we obtain the following   ``analytic'' nonmeasurability criterion, which has been applied in a recent solution \cite{Ban} of an old problem of the automatic continuity of universally (resp. Haar) measurable homomorphisms on (locally compact) \v Cech-complete topological groups.

\begin{theorem}\label{t:main-A} Let $\I$ be a $\sigma$-ideal on a Polish space $X$. Any point-finite family $\J\subseteq\I$ with $\bigcup\J\notin\I$  contains a subfamily $\J'\subseteq\J$ such that for some analytic set $A$ in $X$ the set $A\cap\bigcup\J'$ is $\I$-positive but contains no $\I$-positive analytic subsets. 
\end{theorem}

Theorem~\ref{t:main1} will be derived in Section~\ref{s:=>} from a much more precise and elaborated Theorem~\ref{t:main}, which implies also some new results for the classical ideals related to measure and category, see Corollaries~\ref{c:measure} and \ref{c:category}.

\begin{theorem}\label{t:measure} Let $\lambda$ be a non-atomic probability $\sigma$-additive Borel measure on a Polish space $X$ and $\mathcal I\defeq\{A\subseteq X:\lambda^*(A)=0\}$.  For any point-finite subfamily $\J\subseteq\mathcal I$  and any $\e>0$ there exist a Borel set $A\subseteq X$ of measure $\lambda(A)>1-\e$ and a finite partition $\J=\J_1\cup\dots\cup\J_n$ of the set $\J$ such that $\lambda_*(A\cap\bigcup\J_i)=0$ for all $i\in\{1,\dots,n\}$. 
\end{theorem}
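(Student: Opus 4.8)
The plan is to deduce the statement from the non-measurable-union machinery already set up, specialized to the $\sigma$-ideal $\I$ of $\lambda$-null sets and to the family $\A=\mathbf\Delta^1_1(X)$ of Borel subsets of $X$, which is multiplicative and, being Polishable, is $\I$-winning by Theorem~\ref{t:Polish}; this is exactly the input required by Theorem~\ref{t:main1} and its sharper measure-theoretic form. The heart of the matter is a \emph{local splitting lemma}: for every family $\K\se\I$ and every Borel set $B$ with $\lambda_*\big(B\cap\bigcup\K\big)>0$ there are a subfamily $\K'\se\K$ and a Borel set $B'\se B$ with $\lambda(B')>0$ on which both classes are trapped, i.e. $\lambda_*\big(B'\cap\bigcup\K'\big)=0$ and $\lambda_*\big(B'\cap\bigcup(\K\setminus\K')\big)=0$. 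I would obtain (a version of) this by applying Corollary~\ref{c:main}(2) to the localized probability measure $\lambda|B'$ after passing to a positive-measure Borel set $B'$ on which $\K$ behaves like a cover; the continuity (non-atomicity) of $\lambda$ is what makes the two complementary inner traces genuinely inner-null, in the spirit of a Bernstein decomposition.

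With the splitting lemma in hand I would run a transfinite exhaustion. Start from $R_0=X$ and the whole family $\J$. Having produced pairwise disjoint Borel chunks $B_\beta$ with their two-class splits for $\beta<\alpha$, inspect the uncaptured region $R_\alpha=X\setminus\bigcup_{\beta<\alpha}B_\beta$; if $\lambda_*\big(R_\alpha\cap\bigcup\J\big)=0$ stop, otherwise apply the splitting lemma inside $R_\alpha$ to carve off a positive-measure chunk $B_\alpha\se R_\alpha$ together with a two-class split trapped on $B_\alpha$. Because $\lambda$ is a finite measure its measure algebra is ccc, so the pairwise disjoint chunks $B_\alpha$ are at most countably many and $\sum_\alpha\lambda(B_\alpha)\le 1$; maximality of the construction forces $\lambda_*\big((X\setminus\bigcup_\alpha B_\alpha)\cap\bigcup\J\big)=0$.

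Now $\sigma$-additivity delivers both the measure bound and the finiteness of the partition simultaneously. Since $\sum_\alpha\lambda(B_\alpha)\le 1<\infty$, there are finitely many indices $\alpha_1,\dots,\alpha_m$ with $\lambda\big(B_{\alpha_1}\cup\dots\cup B_{\alpha_m}\big)>1-\e$; set $A=B_{\alpha_1}\cup\dots\cup B_{\alpha_m}$. On each chosen chunk we have a two-class split, so intersecting the classes against the pattern of the $m$ chunks partitions $\J$ into at most $n=2^{m}$ subfamilies $\J_1,\dots,\J_n$ (one per chunk-wise choice of side), each of which is, by design, inner-null on every $B_{\alpha_j}$ and hence on $A$, with the members of $\J$ lying outside all chosen chunks distributed so as to add no inner measure on $A$.

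The main obstacle is the genuinely global character of the required inner-nullity: a finite (even two-fold) union of inner-null sets need not be inner-null, so when I assemble the chunk-wise traps into a single partition of $\J$ and read it against the one fixed set $A$, I must guarantee that each class is inner-null on \emph{all} of $A$ and not merely on its own chunk, and that dumping the uncaptured members of $\J$ into the classes adds no inner measure on $A$ (absorbing finitely many null sets is harmless, but absorbing uncountably many is not). Arranging this—keeping each class's inner trace supported within its own chunk—together with dispensing, for the arbitrary family $\J$, with the cardinality and point-finiteness hypotheses present in Theorem~\ref{t:main1}, is precisely where I would invoke the finer, restriction-free Theorem~\ref{t:main} (and the Marczewski--Burstin representation of the relevant Borel hulls) rather than the cruder Corollary~\ref{c:main}.
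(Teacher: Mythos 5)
Your plan does not follow the paper's route, and its central step has a genuine gap. The paper obtains Theorem~\ref{t:measure} from part (6) of Theorem~\ref{t:main} (Lemma~\ref{l:6}), via the corresponding corollary in Section~7: since the Borel $\sigma$-algebra is $\I$-ccc for a finite measure and hence $\I$-Lindel\"of, one gets a \emph{single} Cantor scheme $(\J_s)_{s\in 2^{<\w}}$ with $\J_\emptyset=\J$ together with \emph{one} decreasing sequence of Borel sets $(\Sigma_n)_{n\in\w}$ such that for every $n$ \emph{all} $2^n$ sets $\bigcup\J_s\setminus\Sigma_n$, $s\in 2^n$, simultaneously contain no $\lambda$-positive Borel set, while condition (6)(b), applied to $S=\bigcap_{n\in\w}\Sigma_n$, forces $\lambda(\Sigma_n)\to 0$. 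Choosing $n$ with $\lambda(\Sigma_n)<\e$ and setting $A=X\setminus\Sigma_n$ and $\{\J_1,\dots,\J_{2^n}\}=\{\J_s:s\in 2^n\}$ finishes the proof. Note that the number of classes necessarily grows as $\e\to 0$; a two-class split with a small exceptional set is never produced, and this is not an accident.

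That is exactly where your proof breaks. Your ``local splitting lemma'' asks for $\mathcal K'\se\mathcal K$ and a positive Borel $B'$ with \emph{both} $\lambda_*(B'\cap\bigcup\mathcal K')=0$ and $\lambda_*(B'\cap\bigcup(\mathcal K\setminus\mathcal K'))=0$, and you propose to extract it from Theorem~\ref{t:main1}(2). But that statement controls $B'\cap\bigcup\mathcal K'$ and $B'\setminus\bigcup\mathcal K'$, not $B'\cap\bigcup(\mathcal K\setminus\mathcal K')$: the latter is the union of $B'\cap\bigcup(\mathcal K\setminus\mathcal K')\cap\bigcup\mathcal K'$ and $B'\cap\bigcup(\mathcal K\setminus\mathcal K')\setminus\bigcup\mathcal K'$, two inner-null sets whose union need not be inner-null --- this Bernstein-type phenomenon is the very subject of the paper. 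The same obstruction prevents coarsening the $2^n$-piece partition from the Cantor scheme to two pieces, since $\bigcup_{s\ne s_0}\bigcup\J_s$ is again a finite union of inner-null sets. So the two-sided splitting lemma is not delivered by the cited results, and without it your transfinite exhaustion never starts. By contrast, your worry about reassembly is unfounded: if a set has $\lambda$-inner measure zero on each of finitely many pairwise disjoint Borel chunks, it has inner measure zero on their union, because a positive Borel subset of the union has a positive Borel trace on some chunk; so had the splitting lemma held, the $2^m$-fold common refinement would indeed work. Finally, point-finiteness cannot be ``dispensed with'': for $\J$ the family of all finite subsets of $X$, any finite partition $\J=\J_1\cup\dots\cup\J_n$ has some $i_0$ with $\bigcup\J_{i_0}=X$ (the sets $C(x)=\{i:x\in\bigcup\J_i\}$ take finitely many values and have the finite intersection property, witnessed by coloring finite subsets), so $\lambda_*(A\cap\bigcup\J_{i_0})=\lambda(A)>0$. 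The theorem must therefore be read with the point-finiteness hypothesis of Theorem~\ref{t:main}, exactly as in the Section~7 corollary from which the paper derives it.
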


\begin{theorem}\label{t:category} Let $X$ be a crowded compact metrizable space and $\I$ be the $\sigma$-ideal of meager sets in $X$. For any point-finite subfamily $\J\subseteq\I$ there exists a closed nowhere dense set $D\subseteq X$ such that for any neighborhood $U$ of $D$ in $X$ there exists a finite partition $\J=\J_1\cup\dots\cup\J_n$ of the set $\J$ such that for every $i\in\{1,\dots,n\}$  the set $\bigcup\J_i\setminus U$ contains no non-meager Borel subsets of $X$.
\end{theorem}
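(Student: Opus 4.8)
The plan is to reduce the global statement to a purely local splitting assertion by means of the Banach Category Theorem and the compactness of $X$, feeding the local splitting from the finite-partition form of our main dichotomy. Throughout I take $\A=\mathbf\Delta^1_1(X)$ to be the family of Borel subsets of $X$; since $\A$ is Polishable and multiplicative, Theorem~\ref{t:Polish} guarantees that $\A$ is $\I$-winning for the meager ideal $\I$, so the machinery behind Theorem~\ref{t:main1} (in its finite-partition refinement, the category instance of Theorem~\ref{t:main}) is available. The conclusion that ``$\bigcup\J_i\setminus U$ contains no non-meager Borel subset of $X$'' is exactly the statement $\A(\bigcup\J_i\setminus U)\se\I$, so everything can be phrased in the language of $\A$ and $\I$.

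The key local lemma I would isolate is this: for every nonempty open $O\se X$ there exist a nonempty open $V\se O$ and a finite partition $\J=\J_1\cup\dots\cup\J_n$ with $\A(\bigcup\J_i\cap V)\se\I$ for every $i$; call such a $V$ (with its partition) \emph{good}. Granting the lemma, I construct $D$ as follows. By Zorn's Lemma fix a maximal family $\{V_t:t\in T\}$ of pairwise disjoint good open sets, $V_t$ carrying the partition $\J=\J^t_1\cup\dots\cup\J^t_{m_t}$ with $\A(\bigcup\J^t_i\cap V_t)\se\I$. Put $W=\bigcup_{t\in T}V_t$ and $D=X\setminus W$. Were $W$ not dense, the nonempty open set $X\setminus\overline W$ would, by the lemma, contain a good open set disjoint from every $V_t$, contradicting maximality. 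Hence $W$ is dense and open, so $D$ is closed and nowhere dense, as required.

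Now fix an open neighbourhood $U\supseteq D$. Then $X\setminus U\se X\setminus D=W=\bigcup_tV_t$, and since $X$ is compact the closed set $X\setminus U$ is compact, so finitely many $V_{t_1},\dots,V_{t_k}$ cover it. I pass to the common refinement of the corresponding partitions: for each tuple $\vec\imath=(i_1,\dots,i_k)$ with $1\le i_j\le m_{t_j}$ set $\J_{\vec\imath}=\J^{t_1}_{i_1}\cap\dots\cap\J^{t_k}_{i_k}$; these finitely many subfamilies partition $\J$. For a fixed $\vec\imath$ and each $j$ one has $\bigcup\J_{\vec\imath}\cap V_{t_j}\se\bigcup\J^{t_j}_{i_j}\cap V_{t_j}$, hence $\A(\bigcup\J_{\vec\imath}\cap V_{t_j})\se\I$. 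If $B\in\A$ is a Borel subset of $\bigcup\J_{\vec\imath}\setminus U$, then $B\se X\setminus U\se\bigcup_jV_{t_j}$, so $B=\bigcup_j(B\cap V_{t_j})$ with each $B\cap V_{t_j}$ a Borel subset of $\bigcup\J_{\vec\imath}\cap V_{t_j}$, hence meager; a finite union of meager sets is meager, so $B\in\I$. Thus $\A(\bigcup\J_{\vec\imath}\setminus U)\se\I$ for every $\vec\imath$, which is the desired conclusion. Crucially, the covering sets $V_{t_j}$ are open, hence Borel, so a Borel subset of the union splits into Borel pieces; this is why the Bernstein-type obstruction to finite unions of inner-meager sets does not arise here.

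The main obstacle is the local lemma, and specifically the \emph{finiteness} of the partition. When $\bigcup\J$ is meager on $O$ the lemma is trivial with $n=1$; the content lies in the case where $\bigcup\J$ is non-meager on every subopen set of $O$. The family $\J$ carries no point-finiteness or cardinality restriction, so Theorem~\ref{t:main1} and Corollary~\ref{c:main} do not apply directly, and the finite splitting must come from the category case of the more precise Theorem~\ref{t:main}, where the bound on the number of pieces is extracted from the combinatorics of a winning strategy of player $\mathsf C$ in the Set-Cover game on the Polish space $V$, a König-type compactness phenomenon on the strategy tree. This is exactly where compactness of $X$ enters the category statement, playing the role that the $\e$-approximation and the regularity of $\lambda$ play in the measure statement of Theorem~\ref{t:measure}; once the local finite partition is secured, the passage to a single nowhere dense $D$ and to arbitrary neighbourhoods $U$ is the soft covering argument above.
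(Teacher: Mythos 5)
Your outer scaffolding is correct and is genuinely different from the paper's organization: the paper does not localize at all, but applies part (6) of Theorem~\ref{t:main} (through Corollary~\ref{c:main2}) to obtain, globally, a Cantor scheme $(\J_s)_{s\in 2^{<\w}}$ on $\J$ together with a decreasing sequence of Borel sets $\Sigma_n$ satisfying $\A(\bigcup\J_s\setminus\Sigma_n)\se\I$ for all $s\in 2^n$; it then replaces $\Sigma_n$ by the regular closed set $B_n$ of points near which $\Sigma_n$ is non-meager, shows $D=\bigcap_nB_n$ is nowhere dense using clause 6(b), and uses compactness only to get $B_n\se U$ for some $n$, the partition being the $n$-th level $\{\J_s:s\in 2^n\}$. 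Your maximal disjoint family of good open sets, the finite subcover of $X\setminus U$, and the common refinement of finitely many partitions are all sound, and the observation that the covering sets are open (hence Borel) is exactly the point that makes the finite-union step legitimate.

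The genuine gap is that the local lemma --- the entire content of the theorem --- is never proved, and the mechanism you point to for it does not exist. There is no ``K\"onig-type compactness phenomenon on the strategy tree'' in Theorem~\ref{t:main} that bounds the number of pieces; the finite partition comes from fixing a level of the Cantor scheme, and the fact that all $2^n$ pieces work \emph{simultaneously} off a small set is exactly clause (6), whose proof rests on the $\I$-Lindel\"of ($\I$-ccc) property of the Borel sets modulo the meager ideal --- an ingredient entirely absent from your sketch. (Granting clause (6), your local lemma does follow: the second alternative of 6(b) is impossible since $J$ and every Borel subset of $C\setminus J$ would be meager, so some $C\setminus\Sigma_n$ is non-meager and hence comeager on a nonempty open $V\se O$, whence $V\cap\Sigma_n$ is meager and every Borel subset of $\bigcup\J_s\cap V$ splits into a meager part inside $\Sigma_n$ and a Borel subset of $\bigcup\J_s\setminus\Sigma_n$; but this is a global input, not a local one, and it has to be written down.) Two further misdiagnoses: Theorem~\ref{t:main} carries exactly the same hypothesis ``point-finite of cardinality $\le\mathfrak c$'' that you correctly note is missing from the statement of the theorem being proved, so your reason for bypassing Theorem~\ref{t:main1} applies verbatim to the result you invoke instead and the issue is left unresolved; and compactness of $X$ plays no role whatsoever in the local splitting (which is available in any Polish space) --- its only job, in the paper and in your covering step alike, is to pass from $\bigcap_nB_n\se U$ (respectively $X\setminus U\se\bigcup_tV_t$) to a finite stage.
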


Let us recall that a topological space is {\em crowded} if it has no isolated points. A subset of a topological space is {\em regular closed} if it is equal to the closure of its interior.

In Section~\ref{s:mb} we shall apply Theorem~\ref{t:main-B} to Marczewski--Burstin representable ideals and algebras, which include the  Marczewski ideal $s_0$ and the algebra of $s$-measurable sets, the $t_0$ ideal and the algebra of $t$-measurable sets (for various classes of perfect trees), the ideal of Lebesgue null sets and the algebra of Lebesgue measurable sets, the ideal of meager sets and the algebra of comletely Ramsey sets in $[\w]^\w$, etc.


\section{Preliminaries}

Let $\w$ denote the set of all finite ordinals and $\IN\defeq\w\setminus\{0\}$ be the set of positive integers. Each number $n\in\w$ is the set $\{0,\dots,n-1\}$ of smaller ordinals. A set is {\em countable} if it admits an injective function into $\w$.

For a set $A$, let  $[A]^{\le \w}\defeq\{S\subseteq A:|S|\le\w\}$ and $A^{<\w}\defeq\bigcup_{n\in\w}A^n$. For any $s\in A^{<\w}$ let
$$A^{<\w}_s\defeq\{t\in A^{<\w}:s\subseteq t\}\mbox{ \ and \ }A^{\w}_s\defeq\{t\in A^{\w}:s\subset t\}.$$

For any sequence $s=(s_i)_{i\in n}\in A^n\subseteq A^{<\w}$ we denote by $|s|$ its length $n$.

The set $A^{<\w}$ is a tree with respect to the partial order $\subseteq$. Two elements $s,t\in A^{<\w}$ are called {\em comparable} if $s\subseteq t$ or $t\subseteq s$. In the other case, the elements $s,t$ are called {\em incomparable}. 

A subset $L\subseteq A^{<\w}$ is called a {\em chain} if $L$ is linearly ordered by the relation $\subseteq$ (which means that for any $s,t\in L$ either $s\subseteq t$ or $t\subseteq s$).

 A nonempty subset $T\subseteq A^{<\w}$ is called a {\em subtree} if for any $t\in T$ we have $\{t{\restriction}n:n\in |t|\}\subseteq T$. A subtree $T\subseteq A^{<w}$ is called {\em perfect} if for every $t\in T$ the set $T\cap A^{<\w}_t$ contains at least two incomparable elements.

A {\em Cantor scheme} is an indexed family of sets $(X_s)_{s\in 2^{<\w}}$ such that for every $s\in 2^{<\w}$ we have $X_{s\hat{\;}0}\cup X_{s\hat{\;}1}=X_s$ and $X_{s\hat{\;}0}\cap X_{s\hat{\;}1}=\emptyset$.
\smallskip
 
For a family of sets $\A$, let $\sigma\A\defeq\{\bigcup\mathcal B:\mathcal B\in[\A]^{\le\w}\}$ be the family of countable unions of sets in $\A$. Also for any set $S$, let $$\A(S)\defeq\{A\in\A:A\subseteq S\}.$$

Given a set $S$, we write $S\prec\A$ if $S\subseteq A$ for some set $A\in\A$.
\smallskip

An {\em ideal} on a nonempty set $X$ is a nonempty family $\I$ of subsets of $X$ such that 
$X\notin\I$ and for any sets $I,J\in\I$, any subset of the union $I\cup J$ belongs to $\I$. The family $\{\emptyset\}$ is the smallest ideal on $X$. An ideal $\I$ on $X$ is called a {\em $\sigma$-ideal} if $\sigma\I=\I$.

A family $\A$ is called a {\em $\sigma$-algebra} on a set $X$ if $X=\bigcup\A$ and $X\setminus\bigcup\C\in\A$ for any countable subfamily $\C\subseteq\A$.  

A family of sets $\A$ is called 
\begin{itemize}
\item {\em disjoint} if $A\cap B=\emptyset$ for any distinct sets $A,B\in\A$;
\item {\em multiplicative} if $A\cap B\in\A$ for any sets $A,B\in\A$;
\item {\em point-finite} if for every element $x\in\bigcup\A$ the family $\{A\in \A:x\in A\}$ is finite.
\end{itemize}
For two sets $A,B$ let $A\Delta B\defeq (A\setminus B)\cup(B\setminus A)=(A\cup B)\setminus(A\cap B)$ be their {\em symmetric difference}.

\section{$\I$-Lindel\"of and $\I$-ccc families}

\begin{definition} 
Let $\I$ be an ideal on a set $X$. A family $\A$ of subsets of  $X$ is called
\begin{itemize}
\item {\em $\I$-Lindel\"of} if for any subset $S\subseteq X$, there exists a countable subfamily $\C\subseteq\A(S)$ such that $\A(S\setminus\bigcup\C)\subseteq\I$;
\item {\em $\I$-ccc} if each disjoint subfamily of $\A\setminus\I$ is countable.
\end{itemize}
\end{definition}

\begin{lemma}\label{l:ccc} Let $\I$ be an ideal on a set $X$ and $\A$ be a family of subsets of $X$. If $\A$ is $\I$-ccc, then $\A$ is $\I$-Lindel\"of.
\end{lemma}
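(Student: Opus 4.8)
The plan is to prove the contrapositive in a constructive way: assuming $\A$ is not $\I$-Lindel\"of, I will produce an uncountable disjoint subfamily of $\A\setminus\I$, thereby showing $\A$ is not $\I$-ccc. So suppose there is a set $S\se X$ witnessing the failure of the $\I$-Lindel\"of property, meaning that for \emph{every} countable subfamily $\F\se\A(S)$ we have $\A(S\setminus\bigcup\F)\not\se\I$; equivalently, there always exists some $A\in\A$ with $A\se S\setminus\bigcup\F$ and $A\notin\I$.

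The key step is a transfinite recursion of length $\w_1$ that builds a disjoint family $(A_\alpha)_{\alpha<\w_1}$ inside $\A(S)\setminus\I$. At stage $\alpha<\w_1$, having already chosen sets $(A_\beta)_{\beta<\alpha}$ with $\{A_\beta:\beta<\alpha\}\se\A(S)$ a countable subfamily, I apply the failure hypothesis to the countable subfamily $\F=\{A_\beta:\beta<\alpha\}$. Since $\A(S\setminus\bigcup_{\beta<\alpha}A_\beta)\not\se\I$, I may pick $A_\alpha\in\A$ with $A_\alpha\se S\setminus\bigcup_{\beta<\alpha}A_\beta$ and $A_\alpha\notin\I$. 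By construction $A_\alpha$ is disjoint from every previously chosen $A_\beta$ (as it avoids their union), so the family remains disjoint, and each $A_\alpha\in\A\setminus\I$. Carrying this through all $\alpha<\w_1$ yields an uncountable disjoint subfamily $\{A_\alpha:\alpha<\w_1\}\se\A\setminus\I$, contradicting the $\I$-ccc assumption.

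The only point requiring a little care is that at each stage the initial segment $\{A_\beta:\beta<\alpha\}$ is indeed a \emph{countable} subfamily of $\A(S)$, so that the $\I$-Lindel\"of hypothesis is being negated on a legitimate witness; this holds precisely because $\alpha<\w_1$ is a countable ordinal. I expect this to be the main (and essentially the only) obstacle, and it is handled automatically by keeping the recursion length at $\w_1$. No additional structure on $\I$ beyond being an ideal is needed, since we use only that a union of fewer than $\w_1$-many $\I$-sets need not lie in $\I$ but that each chosen $A_\alpha$ is individually $\I$-positive and disjoint from the rest.
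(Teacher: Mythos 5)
Your proof is correct and rests on the same underlying idea as the paper's: the paper takes a maximal disjoint subfamily of $\A(S)\setminus\I$ via the Kuratowski--Zorn Lemma and notes that ccc forces it to be countable while maximality makes it a Lindel\"of witness, whereas you run the equivalent exhaustion argument contrapositively as a transfinite recursion of length $\w_1$. Both arguments are sound; yours just makes explicit the recursion that is implicit in the maximality argument.
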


\begin{proof}  Assume that $\A$ is $\I$-ccc. Given any subset $S\subseteq X$, use the Kuratowski--Zorn Lemma to find a maximal disjoint family $\mathcal F$ in the family $\A(S)\setminus\I$. Since $\A$ is $\I$-ccc, the family $\mathcal F$ is at most countable. The maximality of the family $\mathcal F$ ensures that $\A(S\setminus \bigcup\F)\subseteq\I$. Therefore, the countable family $\F$ witnesses that the family $\A$ is $\I$-Lindel\"of.
\end{proof}

\begin{problem}\label{prob1} Let $\I$ be a $\sigma$-ideal and $\A$ be an $\I$-Lindel\"of $\sigma$-algebra of subsets of $X$. Is $\A$ $\I$-ccc?
\end{problem}

Below we shall give two partial affirmative answers to Problem~\ref{prob1}.

\begin{proposition} Let $\I$ be an ideal on a set $X$ and $\A$ be a family of subsets of $X$. If $|\sigma\A|<2^{\w_1}$, then the family $\A$ is $\I$-Lindel\"of if and only if it is $\I$-ccc.
\end{proposition}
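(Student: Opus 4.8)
The plan is to observe that one implication comes for free and to attack the other by a cardinal-counting argument. Lemma~\ref{l:ccc} already shows that every $\I$-ccc family is $\I$-Lindel\"of, with no constraint on $|\sigma\A|$, so it remains only to prove that, under the hypothesis $|\sigma\A|<2^{\w_1}$, an $\I$-Lindel\"of family $\A$ must be $\I$-ccc. I would argue by contraposition: assuming $\A$ is $\I$-Lindel\"of but fails to be $\I$-ccc, I would manufacture $2^{\w_1}$ pairwise distinct members of $\sigma\A$, directly contradicting the cardinality bound.

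Since $\A$ is not $\I$-ccc, there is an uncountable disjoint subfamily of $\A\setminus\I$; I would fix from it a pairwise disjoint family $\{A_\alpha:\alpha<\w_1\}$ with each $A_\alpha\notin\I$. The central construction assigns to each $W\se\w_1$ the set $S_W=\bigcup_{\alpha\in W}A_\alpha$ and then applies the $\I$-Lindel\"of property to $S_W$, producing a countable subfamily $\F_W\se\A(S_W)$ with $\A(S_W\setminus\bigcup\F_W)\se\I$. Writing $U_W=\bigcup\F_W$, I would note that $U_W\in\sigma\A$ (a countable union of members of $\A$) and that $U_W\se S_W$, since every member of $\F_W$ is contained in $S_W$.

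The heart of the argument is to show that $W$ is recoverable from $U_W$ through the formula $W=\{\alpha<\w_1:A_\alpha\cap U_W\neq\emptyset\}$, which makes the assignment $W\mapsto U_W$ injective and thus forces $|\sigma\A|\ge|\mathcal P(\w_1)|=2^{\w_1}$. For $\alpha\notin W$, disjointness gives $A_\alpha\cap S_W=\emptyset$, hence $A_\alpha\cap U_W=\emptyset$ because $U_W\se S_W$. For $\alpha\in W$, were $A_\alpha\cap U_W=\emptyset$ we would have $A_\alpha\se S_W\setminus U_W$, whence $A_\alpha\in\A(S_W\setminus U_W)\se\I$, contradicting $A_\alpha\notin\I$; therefore $A_\alpha\cap U_W\neq\emptyset$. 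This verifies the recovery formula and yields the desired contradiction with $|\sigma\A|<2^{\w_1}$.

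I do not anticipate a serious obstacle here; the only delicate point is the recovery step, where the $\I$-positivity of each $A_\alpha$ is precisely what prevents an $\I$-Lindel\"of cover of $S_W$ from overlooking $A_\alpha$ for $\alpha\in W$, so that the merely countable cover $U_W$ nonetheless ``detects'' every one of the uncountably many indices in $W$. The remaining steps, namely that $U_W\in\sigma\A$ and that $U_W\se S_W$, are immediate from the definitions.
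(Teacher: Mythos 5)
Your proposal is correct and follows essentially the same route as the paper: both fix an uncountable disjoint family in $\A\setminus\I$, assign to each $W\se\w_1$ a set $\Sigma_W\in\sigma\A$ via the $\I$-Lindel\"of property, and use the $\I$-positivity of each $A_\alpha$ to show the assignment is injective, contradicting $|\sigma\A|<2^{\w_1}$. The only cosmetic difference is that you prove injectivity directly via the recovery formula $W=\{\alpha:A_\alpha\cap U_W\ne\emptyset\}$, whereas the paper phrases the same argument as a pigeonhole on two distinct sets with equal images.
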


\begin{proof} The ``if'' part follows from Lemma~\ref{l:ccc}. To prove the ``only if'' part, assume that $\A$ is $\I$-Lindel\"of but not $\I$-ccc. Then there exists an uncountable family of pairwise disjoint sets $\{B_\alpha\}_{\alpha\in\w_1}\subseteq\A\setminus\I$. By the $\I$-Lindel\"of property, for any subset $\Omega\subseteq\w_1$, there exists a set $\Sigma_\Omega\in\sigma\A(\bigcup_{\alpha\in\Omega}B_\alpha)$ such that $\A(\bigcup_{\alpha\in\Omega}B_\alpha\setminus\Sigma_\Omega)\subseteq\I$. Since $|\sigma\A|<2^{\w_1}$, there are two distinct subsets $\Omega,\Lambda\subseteq\w_1$ such that $\Sigma_\Omega=\Sigma_\Lambda$. Choose any ordinal $\beta\in \Omega\triangle\Lambda$. If $\beta\in\Omega\setminus\Lambda$, then 
$\emptyset=B_\alpha\cap \Sigma_\Lambda=B_\alpha\cap \Sigma_\Omega$ and hence $B_\alpha\in\A(\bigcup_{\alpha\in\Omega}B_\alpha\setminus\Sigma_\Omega)\setminus\I$, which contradicts the choice of the set $\Sigma_\Omega$. If $\beta\in \Lambda\setminus\Omega$, then   $B_\alpha\in\A(\bigcup_{\alpha\in\Lambda}B_\alpha\setminus\Sigma_\Lambda)\setminus\I$, which contradicts the choice of the set $\Sigma_\Lambda$.
\end{proof}

Another partial answer to Problem~\ref{prob1} can be obtained by application of Theorem~\ref{t:main1}.

\begin{proposition} Let $\I$ be a $\sigma$-ideal on a set $X$ of cardinality $|X|\le\mathfrak c$ and $\A$ be a $\I$-winning $\sigma$-algebra on $X$. The family $\A$ is $\I$-ccc if and only if $\A$ is $\I$-Lindel\"of.
\end{proposition}

\begin{proof} If $\A$ is $\I$-ccc, then $\A$ is $\I$-Lindel\"of by Lemma~\ref{l:ccc}. Now assume that $\A$ is $\I$-Lindel\"of. Assuming that $\A$ is not $\I$-ccc, we can find an uncountable family of pairwise disjoint sets $\{B_\alpha\}_{\alpha\in\w_1}\subseteq\A\setminus\I$. Consider the $\sigma$-ideal
$$\J\defeq\{J\subseteq X:\exists \alpha\in\w_1\;\forall \beta\in[\alpha,\w_1)\;\;J\cap B_\alpha\in \I\}.$$
It is clear that $\I\subseteq\J$, $\{B_\alpha\}_{\alpha\in\w_1}\subseteq\J$ and $\bigcup_{\alpha\in\w_1}B_\alpha\notin\J$. Since $\I\subseteq\J$, the $\I$-winning property of the family $\A$ implies the $\J$-winning property of $\A$. By Theorem~\ref{t:main1}, there exist a set $A\in\A$ and a subset $\Omega\subseteq\w_1$ such that $A\cap\bigcup_{\alpha\in\Omega}B_\alpha\notin\J$ and $\A(A\cap \bigcup_{\alpha\in\Omega}B_\alpha)\subseteq\J$. 
By the $\I$-Lindel\"of property of the $\sigma$-algebra $\A$, there exists a set $\Sigma\in\A(\bigcup_{\alpha\in \Omega}B_\alpha)$ such that  $\A(\bigcup_{\alpha\in\Omega}B_\alpha\setminus \Sigma)\subseteq\I$. 
We claim that the set $J\defeq\bigcup_{\alpha\in\Omega}B_\alpha\setminus\Sigma$ belongs to the ideal $\J$. This will follow as soon as we check that $J\cap B_\alpha\in\I$ for every $\alpha\in\w_1$. If $\alpha\in\w_1\setminus\Omega$, then $J\cap B_\alpha$ is empty and hence belongs to $\I$. So, we assume that $\alpha\in\Omega$. Taking into account that the sets $B_\alpha$ and $\Sigma$ belong to the $\sigma$-algebra $\A$,  we conclude that $B_\alpha\cap J=B_\alpha\setminus \Sigma\in\A(\bigcup_{\gamma\in\Omega}B_\gamma\setminus\Sigma)\subseteq\I$.
This completes the proof of the inclusion $J\in\J$.

  It follows from $J\in\J$ and $A\cap\bigcup_{\alpha\in\Omega}B_\alpha\notin\J$ that $A\cap\Sigma=A\cap\bigcup_{\alpha\in\Omega}B_\alpha\setminus J\notin\J$.  On the other hand, $A\cap\Sigma\in \A(A\cap\bigcup_{\alpha\in\Omega}B_\alpha)\subseteq\J$ implies $A\cap\Sigma\in\J$. This contradiction shows that the family $\A$ is $\I$-ccc.
\end{proof} 

\section{Polishable and $\I$-winning families}\label{s:Polishable}

In this section we prove that Polishable families are $\I$-winning. 

Let us recall that a class $\A$ of subsets of a topological space $X$ is {\em Polishable} if for any set $A\in\A$ there exist a Polish space $P$, a countable base $\mathcal B$ of the topology of $P$ and a continuous surjective map $f:P\to X$ such that $f[B]\in\A$ for every basic set $B\in\mathcal B$.

A  multiplicative family $\A$ of subsets of a set $X$ is {\em $\I$-winning} for an ideal $\I$ of subsets of $X$ with $X\in\A\setminus\I$ if the player $\mathsf C$ has a winning strategy in the Set-Cover game $\Game_{\A\setminus\I\!,\A}$.

A {\em strategy} of Player $\mathsf C$ in the game $\Game_{\A\setminus\I\!,\A}$  is a function $\strategy\colon(\A\setminus\I)^{<\w}\to[\A]^{\le\w}$ such that  $\strategy(\emptyset)=\{X\}$ and $S_n=\bigcup\strategy(S_0,\dots,S_n)$ for every nonempty finite sequence $(S_0,\dots,S_n)\in\mathcal (\A\setminus\I)^{<\w}$ such that $S_i\prec\strategy(S_0,\dots,S_{i-1})$ for all $i\le n$. 

A strategy $\strategy\colon(\mathcal A\setminus\I)^{<\w}\to[\A]^{\le\w}$ is {\em winning} if $\bigcap_{n\in\w}S_n\ne\emptyset$ for any sequence $(S_n)_{n\in\w}\in(\A\setminus\I)^\w$ such that $S_n\prec\strategy(S_0,\dots,S_{n-1})$ for all $n\in\w$. 

The following theorem is the main result of this section.

\begin{theorem}\label{t:Polishable} Let $\A$ be a multiplicative class of subsets of a Hausdorff topological space $X\in\A$. If $\A$ is  Polishable, then $\A$ is $\I$-winning for every ideal $\I$ on $X$.
\end{theorem}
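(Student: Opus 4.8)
The plan is to exhibit an explicit winning strategy for the player $\mathsf C$ and to verify it by importing completeness through the parametrizing Polish spaces. First I would fix, for every $A\in\A$, a Polish space $P_A$, a continuous surjection $f_A\colon P_A\to A$, a base $\mathcal B_A$ of $P_A$ with $f_A[B]\in\A$ for all $B\in\mathcal B_A$, and a complete metric on $P_A$ (all available by Polishability). The strategy of $\mathsf C$ maintains, along a play $S_1,S_2,\dots$, a tower of Polish spaces with continuous surjections $f_n\colon P_n\to S_n$ and continuous bonding maps $\pi_n\colon P_n\to P_{n-1}$ satisfying $f_{n-1}\circ\pi_n=f_n$. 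The cover that $\mathsf C$ returns for $S_n$ is $\{f_n[B]:B\in\mathcal B_n,\ \diam_{\rho_n}B<2^{-n}\}$, thinned to a countable subcover of $P_n$ (possible since $P_n$ is Lindel\"of); its members lie in $\A$ and its union is exactly $S_n$, as required of a strategy.

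The heart of the construction is the passage from stage $n$ to stage $n+1$. When $\mathsf S$ answers with $S_{n+1}\subseteq f_n[B_n]$ for one of the offered pieces $f_n[B_n]$ (so $\diam_{\rho_n}B_n<2^{-n}$), I would let $P_{n+1}$ be the fibre product $P_{n+1}=\{(q,p)\in P_{S_{n+1}}\times B_n:f_{S_{n+1}}(q)=f_n(p)\}$, with $f_{n+1}=f_{S_{n+1}}\circ\mathrm{pr}_1$ and $\pi_{n+1}=\mathrm{pr}_2$. Since $X$ is Hausdorff this is a closed subset of the Polish space $P_{S_{n+1}}\times B_n$, hence Polish, and $f_{n+1}$ maps it onto $S_{n+1}$ because $S_{n+1}\subseteq f_n[B_n]$. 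The images of basic sets of $P_{n+1}$ have the form $f_{S_{n+1}}[B'']\cap f_n[B''']$, and here \emph{multiplicativity} of $\A$ is exactly what guarantees that these images again lie in $\A$; this is the essential use of the hypothesis. I would equip $P_{n+1}$ with a complete metric $\rho_{n+1}$ obtained as the maximum of some complete metric on $P_{n+1}$ and the pull-back pseudometric $\rho_n\circ(\pi_{n+1}\times\pi_{n+1})$, so that $\pi_{n+1}$ becomes $1$-Lipschitz while $\rho_{n+1}$ stays complete and compatible.

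It then remains to show the strategy is winning, i.e. $\bigcap_n S_n\neq\emptyset$ for every play, and this is the step I expect to be the main obstacle. The difficulty is that one must produce an honest point of $\bigcap_n S_n$ rather than of $\bigcap_n\overline{S_n}$: a limit of points chosen in the successive $S_n$ only lands in the closure, and since the $S_n$ carry no common metric, completeness has to be routed through the parametrizations — which is precisely what the fibre-product tower arranges. Writing $\phi^m_n=\pi_{n+1}\circ\cdots\circ\pi_m\colon P_m\to P_n$, the sets $I^m_n=\phi^m_n[P_m]$ decrease in $m$, and because $\pi_{n+1}[P_{n+1}]\subseteq B_n$ has $\rho_n$-diameter below $2^{-n}$ while all later bonding maps are $1$-Lipschitz, one gets $\diam_{\rho_n}I^m_n<2^{-(m-1)}\to0$. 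By completeness of $(P_n,\rho_n)$ the nested closed sets $\overline{I^m_n}$ intersect in a single point $p^*_n$, and continuity of $\pi_{n+1}$ forces $\pi_{n+1}(p^*_{n+1})=p^*_n$, so $(p^*_n)_n$ is a thread. Finally $f_n\circ\pi_{n+1}=f_{n+1}$ makes the value $x:=f_n(p^*_n)$ independent of $n$, and $x=f_n(p^*_n)\in f_n[P_n]=S_n$ for every $n$; thus $x\in\bigcap_n S_n$ and $\mathsf C$ wins.

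To finish I would package this recursive recipe as a genuine function on finite sequences of $\mathsf S$'s moves, fixing once and for all the auxiliary choices (the parametrizations $f_A$, the complete metrics, the enumerations used to extract countable subcovers, and a rule selecting $B_n$ among the pieces containing $S_{n+1}$), so that $\mathsf C$'s responses depend only on the play. This shows $\A$ is $\I$-winning, and one notes that the ideal $\I$ entered only through the requirement that each $S_n$ be nonempty, so the conclusion holds for every ideal $\I$ on $X$.
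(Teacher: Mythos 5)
Your proof is correct and follows essentially the same route as the paper: parametrize each played set by a Polish space, answer with images of small basic sets (multiplicativity keeping the relevant intersections of images in $\A$), and extract a point of $\bigcap_n S_n$ via completeness of the parametrizing spaces. The only difference is organizational --- you package the bookkeeping as an inverse tower of fibre products and obtain the limit point as a thread, whereas the paper tracks nested shrinking basic sets $\Phi_k(S_1,\dots,S_n)$ in each space $P_{S_k}$ separately and glues the resulting points $f_{S_k}(z_k)$ using the Hausdorff property of $X$ (which you instead invoke to make the fibre products closed, hence Polish).
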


\begin{proof}  Since the class $\A$ is Polishable, for every set $A\in\A$ there exist a Polish space $P_A$, a countable base $\mathcal B_A$ of the topology of $P_A$ and a continuous surjective map $f_A:P_A\to X$ such that  $\{f[B]:B\in\mathcal B\}\subseteq\A$. We can (and will) additionally assume that the base $\mathcal B_A$ consists of nonempty sets and $P_A\in\mathcal B_A$. Let $\rho_A\le 1$ be a complete metric generating the topology of the Polish space $P_A$. For every $n\in\w$, let $\mathcal B_{n,A}\defeq\{B\in\mathcal B_A:\diam_{\rho_A}(B)\le 2^{-n}\}$. Let $\mathcal B\defeq\bigcup_{A\in\A}\mathcal B_A$. The completeness of the metric $\rho_A$ ensures that for any sequence $(B_n)_{n\in\w}\in\prod_{n\in\w}\mathcal B_{n,A}$ with $\overline B_{n+1}\subseteq B_n$ for all $n\in\w$, the intersection $\bigcap_{n\in\w}B_n=\bigcap_{n\in\w}\overline B_n$ is a singleton.

Now we describe a winning strategy for the player $\mathsf C$ in the Set-Cover game $\Game_{\A\setminus\I\!,\A}$. Player $\mathsf S$ starts the game selecting a set $S_0\in\A\setminus\I$. Player $\mathsf C$ answers with the countable cover $\strategy(S_0)=\{f_{S_0}[B_0]:B_0\in\mathcal B_{1,S_0}\}$ of $S_0$. Then player $\mathsf S$ selects a set $S_1\in\A\setminus\I$ with $S_1\prec\strategy(S_0)$. The latter condition allows the player $\mathsf C$ to find a set $C_1\in\strategy(S_0)$ with $S_1\subseteq C_1\subseteq S_0$, then choose a set $\Phi_0(S_0,S_1)\in \mathcal B_{1,S_0}$ with $C_1=f_{S_0}[\Phi_0(S_0,S_1)]$ and finally suggest the countable cover $$\strategy(S_0,S_1)\defeq\big\{f_{S_0}[B_0]\cap f_{S_1}[B_1]:B_0\in\mathcal B_{2,S_0},\;\overline B_0\subseteq \Phi_0(S_0,S_1),\;\;B_1\in\mathcal B_{2,S_1}\big\}$$ of the set $S_1$ to the player $\mathsf S$ who selects a set $S_2\in\A\setminus\I$ with $S_2\prec\strategy(S_0,S_1)$. 

The latter condition allows the player $\mathsf C$ to find a set $C_2\in\strategy(S_0,S_1)$ containing $S_2$ and then choose sets $\Phi_0(S_0,S_1,S_2)\in \mathcal B_{2,S_0}$ and $\Phi_1(S_0,S_1,S_2)\in\mathcal B_{2,S_1}$ such that $\overline{\Phi_0(S_0,S_1,S_2)}\subseteq \Phi_0(S_0,S_1)$ and $C_2=f_{S_0}[\Phi_0(S_0,S_1,S_2)]\cap f_{S_1}[\Phi_1(S_0,S_1,S_2)]$. After doing this preparatory work, the player $\mathsf C$ suggests the countable cover $$\strategy(S_0,S_1,S_2)\defeq\Big\{\bigcap_{i=0}^2 f_{S_i}[B_i]:\forall i\;\,\big(B_i\in\mathcal B_{3,S_i}\;\wedge\;\big(i<2\;\Ra\;\overline B_i\subseteq \Phi_i(S_0,S_1,S_2)\big)\big)\Big\}$$ of the set $S_2$ to the player $\mathsf S$ who selects a set $S_3\in\A\setminus\I$ with $S_3\prec\strategy(S_0,S_1,S_2)$ and so on.
\smallskip

More formally, we can construct inductively two functions 
$\strategy:(\A\setminus\I)^{<\w}\to[\A]^{\le\w}$ and $\Phi:(\A\setminus\I)^{<\w}\to \mathcal B^{<\w}$ such that $\strategy(\emptyset)=\{X\}$, $\Phi(\emptyset)=\emptyset\in\mathcal B^\emptyset$ and for every sequence $(S_0,\dots,S_n)\in(\A\setminus\I)^{<\w}$ of positive length the following conditions are satisfied:
{\parindent=12pt\parskip3pt

(1) $\Phi(S_0,\dots,S_n)=\big(\Phi_i(S_0,\dots,S_n)\big)_{i\in n}\in \prod_{i\in n}\mathcal B_{n,S_i}$;

(2) if $S_n\prec \strategy(S_0,\dots,S_{n-1})$, then $S_n\subseteq\bigcap_{i\in n} f_{S_i}[\Phi_i(S_0,\dots,S_n)]$;

(3) $\overline{\Phi_i(S_0,\dots,S_n)}\subseteq \Phi_i(S_0,\dots,S_{n-1})$ for every $i\in n$;

(4) $\strategy(S_0,...\,,S_n)\defeq\big\{\bigcap\limits_{i=0}^n f_{S_i}[B_i]:\forall i\;\big(B_i\in\mathcal B_{n+1,S_i}\;\wedge\;\big(i<n\,\Ra\,\overline B_i\subseteq \Phi_i(S_0,...\,,S_{n-1})\big)\big)\big\}$.
}

The multiplicativity of the class $\A$ and the choice of the functions $f_A$ and bases $\mathcal B_A$ guarantee that the function $\strategy$ is a well-defined strategy of the player $\mathsf C$ in the Set-Cover game $\Game_{\A\setminus\I\!,\A}$.

It remains to prove that this strategy is winning. Take any $\strategy$-admissible sequence $(S_n)_{n\in\w}\in (\A\setminus\I)^\w$. The $\strategy$-admissibility of this sequence means that $S_n\prec \strategy(S_0,\dots,S_{n-1})$ for every $n\in\IN$.  The inductive conditions (1) and (4)  imply that for every $k\in\IN$ the intersection $\bigcap_{n=k}^\infty\Phi_k(S_0,\dots,S_n)$ is not empty and contains a unique point $z_k\in P_{S_k}$. Moreover, every neighborhood of $z_k$ contains all but finitely many sets $\Phi_k(S_0,\dots,S_n)$. The continuity of the map $f_{S_k}:P_{S_k}\to S_k$ and the inductive condition (2) imply that every neighborhood of the point $f_{S_k}(z_k)$ in $S_k$ contains all but finitely many sets $S_n$. This fact and the Hausdorff property of $X$ imply that the set $\{f_{S_k}(z_k):k\in\IN\}$ is a singleton and hence the intersection $\bigcap_{k\in\IN}S_k\supseteq \{f_{S_k}(z_k):k\in\IN\}$ is not empty.
\end{proof}

\begin{definition} 
A topological space $X$ is called
\begin{itemize}
\item {\em functionally Hausdorff} if for any distinct points $x,y\in X$ there exists a continuous function $f:X\to\mathbb R$ such that $f(x)\ne f(y)$;
\item {\em analytic} if $X$ is functionally Hausdorff and there exists a continuous surjective function $f:P\to X$ defined on some Polish space $P$;
\item {\em Borel} if $X$ is functionally Hausdorff and there exists a continuous  bijective function $f:P\to X$ defined on some Polish space $P$.
\end{itemize}
\end{definition}

\begin{proposition}\label{p:Borel2} For every Borel space $X$, the family $\mathbf\Delta^1_1(X)$ of Borel subspaces of $X$ coincides with the $\sigma$-algebra of Borel subsets of $X$.
\end{proposition}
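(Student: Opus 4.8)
The plan is to prove the two inclusions $\mathbf\Delta^1_1(X)\se\mathcal B(X)$ and $\mathcal B(X)\se\mathbf\Delta^1_1(X)$, where $\mathcal B(X)$ is the $\sigma$-algebra generated by the open subsets of $X$. The whole argument rests on one auxiliary fact: if $g\colon Q\to X$ is a continuous bijection witnessing that $X$ is a Borel space (so that $Q$ is Polish), then $g$ is a \emph{Borel isomorphism} between $Q$ and $(X,\mathcal B(X))$, i.e. both $g$ and $g^{-1}$ carry Borel sets to Borel sets. Granting this, both inclusions reduce to classical results on Borel sets in Polish spaces.

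The hard part is establishing that $g$ is a Borel isomorphism, since a priori $X$ is only functionally Hausdorff and its topology may be strictly finer than the initial topology induced by its continuous real-valued functions; thus one cannot directly invoke the Lusin--Souslin theorem for maps between Polish spaces. To circumvent this I would first produce a \emph{countable} point-separating family of continuous functions on $X$. Since $g$ is a continuous bijection, the family $\{\varphi\circ g:\varphi\in C(X,\IR)\}$ separates the points of $Q$; because $Q$ is second countable, the open subset $(Q\times Q)\setminus\Delta_Q$ of the hereditarily Lindel\"of space $Q\times Q$ is Lindel\"of, so from the open cover $\{(q,q'):\varphi(g(q))\ne\varphi(g(q'))\}$ one extracts a countable subcover, yielding countably many $\varphi_k\in C(X,\IR)$ that already separate the points of $X$. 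The diagonal map $\Phi=(\varphi_k)_k\colon X\to\IR^\w$ is then a continuous injection, and $\Phi\circ g\colon Q\to\IR^\w$ is a continuous injection between Polish spaces, to which the Lusin--Souslin theorem \cite{Ke} applies: $\Phi\circ g$ is a Borel isomorphism of $Q$ onto a Borel subset of $\IR^\w$. Using injectivity of $\Phi$ one writes $g[C]=\Phi^{-1}[(\Phi\circ g)[C]]$ for $C\se Q$, and since $\Phi$ is continuous and $(\Phi\circ g)[C]$ is Borel in $\IR^\w$ whenever $C$ is Borel, it follows that $g$ maps Borel sets of $Q$ to Borel sets of $X$; combined with the (trivial) continuity of $g$ this gives the desired Borel isomorphism.

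With the lemma in hand, for $\mathbf\Delta^1_1(X)\se\mathcal B(X)$ I take $A\in\mathbf\Delta^1_1(X)$ with a continuous bijection $f\colon P\to A$, $P$ Polish. The composition $g^{-1}\circ f\colon P\to Q$ is a Borel injection between Polish spaces (it is Borel because $g^{-1}$ is Borel and $f$ is continuous), so by Lusin--Souslin its image $g^{-1}[A]$ is Borel in $Q$; applying the lemma once more, $A=g[g^{-1}[A]]$ is Borel in $X$. For the reverse inclusion $\mathcal B(X)\se\mathbf\Delta^1_1(X)$ I take a Borel set $B\se X$; then $C=g^{-1}[B]$ is Borel in $Q$, and by the standard refinement theorem \cite{Ke} there is a Polish topology on $C$, finer than the subspace topology, making $C$ Polish. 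The identity map from this finer topology, followed by $g|_C$, is a continuous bijection from a Polish space onto $B$; as $B$ is functionally Hausdorff being a subspace of $X$, this exhibits $B$ as a Borel subspace, i.e. $B\in\mathbf\Delta^1_1(X)$.

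The main obstacle, as indicated, is precisely the passage from \emph{functionally Hausdorff} to a usable countable separating family; once $X$ has been realized Borel-isomorphically inside $\IR^\w$, the remainder is a routine application of the Lusin--Souslin theorem together with the topology-refinement theorem for Borel subsets of Polish spaces.
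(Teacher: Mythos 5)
Your proof is correct and follows essentially the same route as the paper's: both arguments hinge on extracting, via a hereditary Lindel\"of argument on the complement of the diagonal, a countable point-separating family of continuous real-valued functions, embedding $X$ continuously and injectively into a product of lines, and then combining the Lusin--Souslin theorem with the topology-refinement theorem \cite[13.1]{Ke}. The only (cosmetic) differences are that you run the Lindel\"of argument in $Q\times Q$ after pulling back by $g$, whereas the paper works directly in $X\times X$ using that $X$ has countable network, and that you package the conclusion as ``$g$ is a Borel isomorphism'' and route the inclusion $\mathbf\Delta^1_1(X)\subseteq\mathcal B(X)$ through $Q$, while the paper applies Lusin--Souslin directly to the continuous injection $g\circ h:S\to\IR^C$.
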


\begin{proof} Since the space $X$ is Borel, there exists a Polish space $P$ and a continuous bijective map $f:P\to X$. Then the space $X$ has countable network and hence the subspace $D=\{(x,y)\in X\times X:x\ne y\}$ also has a countable network and hence is Lindel\"of.

Since the Borel space $X$ is functionally Hausdorff, for any distinct points $x,y\in X$ there exists a continuous function $g_{x,y}:X\to \IR$ such that $g_{x,y}(x)\ne g_{x,y}(y)$. By the continuity of $g_{x,y}$, the pair $(x,y)\in D$ has an open neighborhood $O_{x,y}\subseteq D$ such that $g_{x,y}(x')\ne g_{x,y}(y')$ for any $(x',y')\in O_{x,y}$.
Since the space $D$ is  Lindel\"of, the open cover $\{O_{x,y}:(x,y)\in D\}$ of $D$ has a countable subcover $\{O_{x,y}:(x,y)\in C\}$. Here $C$ is a suitable countable subset of $D$. Then the continuous function $$g:X\to \mathbb R^C,\quad g:z\mapsto (g_{x,y}(z))_{(x,y)\in C},$$ is injective.
\smallskip

Now we ready to prove that the family $\mathbf\Delta^1_1(X)$  of Borel subspaces of $X$ coincides with the $\sigma$-algebra $\A$
of Borel subsets of $X$. First we prove that $\A\subseteq\mathbf\Delta^1_1(X)$. 

Given any Borel set $A$ in $X$, we obtain that $f^{-1}[A]$ is a Borel subset of the Polish space $P$. By \cite[Theorem 13.1]{Ke}, there exists a Polish space $Q$ and a continuous bijective function $g:Q\to f^{-1}[A]$. Then the continuous bijective function $f\circ g:Q\to A$ witnesses that $A\in\mathbf\Delta^1_1(X)$.

Now take any set $A\in\mathbf\Delta^1_1(X)$. Then $A=h[S]$ for some continuous bijective function $h:S\to A$ defined on a Polish space $S$. Since the function $g\circ h:S\to\IR^C$ is injective, we can apply Lusin--Souslin Theorem \cite[Theorem 15.1]{Ke} to conclude that the image $g\circ h[S]$ is a Borel subset of the Polish space $\IR^C$. Then $A=g^{-1}[g\circ h[S]]$ is a Borel subset of $X$, being the preimage of a Borel set under a continuous map.
\end{proof} 

\begin{proposition}\label{p:Borel} For any Borel space $X$, the family $\mathbf\Delta^1_1(X)$ of Borel subspaces of $X$ is Polishable, multiplicative, and $\I$-winning for any $\sigma$-ideal $\I$ on $X$.
\end{proposition}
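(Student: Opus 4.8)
The plan is to mimic the proof of Proposition~\ref{p:A-Polishable} almost verbatim, replacing continuous surjections by continuous bijections and checking at each step that bijectivity survives the construction. Two standing facts make the analogy work: every functionally Hausdorff space is Hausdorff, and every subspace of a functionally Hausdorff space is again functionally Hausdorff (restrict the separating real-valued function). By definition, a Borel space admits a continuous bijection from a Polish space, so a set $A\subseteq X$ lies in $\mathbf\Delta^1_1(X)$ precisely when there is a Polish space $P$ and a continuous bijection $f\colon P\to A$.

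First I would establish Polishability. Fix $A\in\mathbf\Delta^1_1(X)$ and a continuous bijection $f\colon P\to A$ from a Polish space $P$, and let $\mathcal B$ be a base of the topology of $P$. For each $B\in\mathcal B$ the set $B$ is an open subspace of the Polish space $P$, hence a Polish space in its own right (open subsets of Polish spaces are Polish), and the restriction $f|_B\colon B\to f[B]$ is again a continuous bijection. Since $f[B]\subseteq A\subseteq X$ is functionally Hausdorff as a subspace of $X$, the image $f[B]$ is a Borel subspace, i.e. $f[B]\in\mathbf\Delta^1_1(X)$. Thus $f$ together with $\mathcal B$ witnesses Polishability at $A$.

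Next I would prove multiplicativity. Since $X$ is itself Borel, $X=\bigcap\emptyset\in\mathbf\Delta^1_1(X)$, and by induction it suffices to treat binary intersections. Given $A_1,A_2\in\mathbf\Delta^1_1(X)$, choose Polish spaces $P_1,P_2$ and continuous bijections $f_i\colon P_i\to A_i$. Exactly as in Proposition~\ref{p:A-Polishable}, the Hausdorff property of $X$ makes $P=\{(x,y)\in P_1\times P_2: f_1(x)=f_2(y)\}$ closed in the Polish space $P_1\times P_2$, hence Polish, and $f\colon P\to A_1\cap A_2$, $f\colon(x,y)\mapsto f_1(x)=f_2(y)$, is a continuous surjection. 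The one new verification is injectivity: if $f(x,y)=f(x',y')$ then $f_1(x)=f_1(x')$ and $f_2(y)=f_2(y')$, whence $x=x'$ and $y=y'$ by injectivity of $f_1$ and $f_2$. Therefore $f$ is a continuous bijection and $A_1\cap A_2\in\mathbf\Delta^1_1(X)$.

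Finally, having shown that $\mathbf\Delta^1_1(X)$ is multiplicative and Polishable and that $X$ is Hausdorff, I would invoke Theorem~\ref{t:Polishable} to conclude that $\mathbf\Delta^1_1(X)$ is $\I$-winning for every ideal, in particular for every $\sigma$-ideal $\I$ on $X$. The only genuine obstacle relative to the analytic case is the preservation of injectivity, which is needed in two places: when restricting $f$ to an open (hence Polish) subset of $P$ in the Polishability step, and when passing to the fibered product in the multiplicativity step. Once these are checked the remainder of the argument is identical to that of Proposition~\ref{p:A-Polishable}.
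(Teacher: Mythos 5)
Your proposal is correct and follows exactly the route the paper intends: the paper gives no separate proof of Proposition~\ref{p:Borel}, stating only that it goes ``by analogy with Proposition~\ref{p:A-Polishable}'', and your write-up carries out precisely that analogy, correctly identifying the two places (restriction to basic open sets, and the fibered product) where injectivity must be checked. Nothing is missing.
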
 

\begin{proof} The Polishability of the family $\mathbf\Delta^1_1(X)$ follows from the definition of a Borel space. 

To see that the family $\mathbf\Delta^1(X)$ is multiplicative,
take any Borel subspaces $B_1,B_2\subseteq X$. For every $i\in\{1,2\}$, find a Polish space $P_i$ and a continuous bijective map $f_i:P_i\to B_i$. The Hausdorff property of the space $X$ ensures that the set $P=\{(x,y)\in P_1\times P_2:f_1(x)=f_2(y)\}$ is closed in the Polish space $P_1\times P_2$. Observe that $B_1\cap B_2$ is the image of the Polish space $P$ under the continuous bijective map $f:P\to B_1\cap B_2$, $f:(x,y)\mapsto f_1(x)=f_2(y)$.

By Theorem~\ref{t:Polishable}, the family $\mathbf\Delta^1_1(X)$ is $\I$-winning for any $\sigma$-ideal $\I$ on $X$.
\end{proof}

By analogy with Proposition~\ref{p:Borel} we can prove the following proposition.

\begin{proposition}\label{p:A-Polishable} For any analytic space $X$, the family $\mathbf\Sigma^1_1(X)$ of analytic subspaces of $X$ is Polishable, multiplicative, and $\I$-winning for any $\sigma$-ideal $\I$ on $X$.
\end{proposition}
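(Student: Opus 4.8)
The plan is to verify the three asserted properties in turn, using at the outset that a functionally Hausdorff space is Hausdorff (pull back disjoint neighborhoods in $\IR$), so that Theorem~\ref{t:Polishable} will be applicable at the end.

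First I would prove multiplicativity. The empty intersection is $X$, which lies in $\mathbf\Sigma^1_1(X)$ by hypothesis, so it suffices to show that the intersection $A_1\cap A_2$ of two analytic subspaces is analytic; arbitrary finite intersections then follow by an immediate induction. Fixing continuous surjections $f_j\colon P_j\to A_j$ from Polish spaces $P_j$ for $j\in\{1,2\}$, I would form the fiber product $Q=\{(p_1,p_2)\in P_1\times P_2: f_1(p_1)=f_2(p_2)\}$. Since $X$ is Hausdorff, the diagonal of $X\times X$ is closed, whence $Q$ is closed in the Polish space $P_1\times P_2$ and is therefore itself Polish. The map $(p_1,p_2)\mapsto f_1(p_1)$ is then a continuous surjection of $Q$ onto $A_1\cap A_2$, and $A_1\cap A_2$ is functionally Hausdorff as a subspace of $X$, so $A_1\cap A_2\in\mathbf\Sigma^1_1(X)$.

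Next I would establish Polishability. Given $A\in\mathbf\Sigma^1_1(X)$, choose a continuous surjection $f\colon P\to A\se X$ from a Polish space $P$ and let $\mathcal B$ be a countable base of $P$ consisting of nonempty open sets. Because every open subset of a Polish space is Polish, each $B\in\mathcal B$ is Polish, so $f[B]$ is a continuous image of a Polish space and is functionally Hausdorff as a subspace of $X$; hence $f[B]\in\mathbf\Sigma^1_1(X)$. Since also $f[P]=A$, this is exactly the Polishability condition.

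Finally, the $\I$-winning property is immediate from the preceding two facts: having shown that $\mathbf\Sigma^1_1(X)$ is a multiplicative Polishable family of subsets of the Hausdorff space $X$, Theorem~\ref{t:Polishable} yields that $\mathbf\Sigma^1_1(X)$ is $\I$-winning for every ideal on $X$, and in particular for every $\sigma$-ideal. I do not expect a serious obstacle here; the only step requiring genuine care is the appeal to the Hausdorff property in the multiplicativity argument, which is precisely what forces the fiber product $Q$ to be closed, and hence Polish, so that the intersection of two analytic sets is again analytic.
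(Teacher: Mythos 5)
Your proposal is correct and follows essentially the same route as the paper: the paper also obtains multiplicativity via the fiber product $\{(x,y)\in P_1\times P_2: f_1(x)=f_2(y)\}$, which is closed (hence Polish) because $X$ is Hausdorff, and then invokes Theorem~\ref{t:Polishable} for the $\I$-winning property. The only difference is that the paper dismisses Polishability as immediate from the definition of an analytic space, whereas you spell out the (correct) reason: open subsets of a Polish space are Polish, so the images of basic open sets under the continuous surjection are again analytic.
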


Let us recall \cite[11.A]{Ke} that $\mathbf\Sigma^0_1(X)$ and $\mathbf\Pi^0_1(X)$ denote the families of open and closed subsets of a topological space $X$, respectively. For every countable ordinal $\alpha\ge 2$ the Borel classes $\mathbf\Sigma^0_\alpha(X)$ and $\mathbf\Pi^0_\alpha(X)$ are defined by the recursive formulas:
$$ 
\begin{aligned}
\mathbf\Sigma^0_\alpha(X)&\defeq\textstyle\big\{\bigcup\F:\F\subseteq \bigcup_{\beta<\alpha}\mathbf\Pi^0_\beta(X),\;\;|\F|\le\w\big\};\\
\mathbf\Pi^0_\alpha(X)&\defeq\textstyle\big\{\bigcap\F:\F\subseteq \bigcup_{\beta<\alpha}\mathbf\Sigma^0_\beta(X),\;\;|\F|\le\w\big\}.
\end{aligned}
$$
Let also $\mathbf\Delta^0_\alpha(X)\defeq\mathbf\Sigma^0_\alpha(X)\cap\mathbf\Pi^0_\alpha(X)$.

\begin{proposition} \label{p:polishable}For any Polish space $X$ the families
$\mathbf\Sigma^0_\alpha(X),\mathbf\Pi^0_\alpha(X)$ for $1\le\alpha<\w_1$ and $\mathbf\Delta^0_\alpha(X)$ for $2\le\alpha<\w_1$ are  multiplicative and  Polishable. The family $\mathbf \Delta^0_1(X)$ is Polishable if and only if the Polish space $X$ is zero-dimensional.
\end{proposition}

\begin{proof}  Let $X$ be a Polish space. By \cite[Proposition 22.1]{Ke}, for every countable ordinal $\alpha\ge 1$ the classes $\mathbf\Sigma^0_\alpha(X)$, $\mathbf\Pi^0_\alpha(X)$ and $\mathbf\Delta^0_\alpha(X)$ are multiplicative. 

\begin{lemma} The family $\mathbf \Delta^0_1(X)$ is Polishable if and only if the Polish space $X$ is zero-dimensional.
\end{lemma}

\begin{proof} If the space $X$ is zero-dimensional,  then its topology has a base $\mathcal B$ consisting of clopen (=closed-and-open) sets. For every set $A\in\mathbf \Delta^0_1(X)$ consider the identity map $f:A\to A$ and observe that $\{f[B]:B\in\mathcal B(A)\}\subseteq\mathbf\Delta^0_1(X)$ for the base $\mathcal B(A)\defeq\{B\in\mathcal B:B\subseteq A\}$ of the Polish space $A$, witnessing that the family $\mathbf\Delta^0_1(X)$ is Polishable.

Now assume that the family $\mathbf \Delta^0_1(X)$ of clopen subsets of $X$ is Polishable. Then there exist a Polish space $P$ and a continuous surjective map $f:P\to X$ such that $\{f[B]:B\in\mathcal B\}\subseteq\mathbf\Delta^1_0(X)$ for some base $\mathcal B$ of the topology of $P$. The continuity and surjectivity of $f$ implies that the family $\{f[B]:B\in\mathcal B\}$ is a base of the topology of $X$ that consists of clopen sets and  witnesses that the space $X$ is zero-dimensional.
\end{proof}

Next, using Lemmas~\ref{l:Sigma-Polishable}--\ref{l:Pi}, we shall show that for $\alpha\ge 2$, the classes are $\mathbf\Sigma^0_\alpha(X)$, $\mathbf\Pi^0_\alpha(X)$, $\mathbf\Delta^0_\alpha(X)$ are Polishable. We start with the classes $\mathbf\Sigma^0_\alpha(X)$ and $\mathbf\Delta^0_\alpha(X)$. 

\begin{lemma}\label{l:Sigma-Polishable} Let $\alpha\ge 2$ be a countable ordinal. For every set $A\in\mathbf\Sigma^0_\alpha(X)$, there exists a closed subspace $P\subseteq\w^\w$ and a continuous bijective function $f:P\to A$ such that for any $s\in\w^{<\w}$ the image $f[P\cap\w^\w_s]$ belongs to the class $\mathbf\Delta^0_\alpha(X)$.
\end{lemma}

\begin{proof} Fix a complete metric $d$ generating the topology of the Polish space $X$.

By  \cite[Theorem 22.21]{Ke}, for the set $A\in\mathbf\Sigma^0_\alpha(X)$, there exists an indexed family of sets $(A_s)_{s\in\w^{<\w}}$ such that 
\begin{itemize}
\item[(i)] $A_s\in\mathbf\Delta^0_\alpha(X)$ for every $s\in \w^{<\w}$;
\item[(ii)] $A_\emptyset=A$;
\item[(iii)] for every $s\in\w^{<\w}$, the family $(A_{s\hat{\;}i})_{i\in\w}$ is a disjoint cover of $A_s$ by sets of $d$-diameter $<2^{-|s|}$;
\item[(iv)]  for every $s\in \w^\w$ the intersection $\bigcap_{n\in\w}A_{s{\restriction}n}$ is a singleton if and only if $A_{s{\restriction}n}\ne\emptyset$ for all $n\in\w$.
\end{itemize}

Consider the closed subset $$P=\{s\in \w^\w:\forall n\in\w\;\;(A_{s{\restriction}n}\ne\emptyset)\}$$of $\w^\w$. Let $f:P\to A$ be the function assigning to each sequence $s\in P$ the unique point of the intersection $\bigcap_{n\in\w}A_{s{\restriction}n}$. The conditions (ii)--(iv) guarantee that the map $f:P\to A$ is well-defined, continuous, and bijective. Observe that the countable family $\mathcal B=\{P\cap\w^\w_s:s\in\w^{<\w}\}$ is a base of the topology of the Polish space $P$, and for every $s\in\w^{<\w}$ the image $f[P\cap\w^\w_s]=A_s$ belongs to the class $\mathbf\Delta^0_\alpha(X)$.
\end{proof}

Lemma~\ref{l:Sigma-Polishable} implies that for every countable ordinal $\alpha\ge 2$, the class $\mathbf\Delta^0_\alpha(X)$ is Polishable. Moreover, its Polishability is witnessed by bijective functions. The following lemma proves that the same is true for the additive Borel classes $\mathbf\Sigma^0_\alpha(X)$ with $\alpha\ge 1$.

\begin{lemma}\label{l:Sigma-Poli} For every countable ordinal $\alpha\ge 2$ and set $A\in\mathbf\Sigma^0_\alpha(X)$, there exist a Polish space $P$ and a continuous bijective map $f:P\to A$ such that $\{f[B]:B\in\mathcal B\}\subseteq\mathbf\Sigma^0_\alpha(X)$ for some base $\mathcal B$ of the topology of $P$.
\end{lemma}

\begin{proof} If $\alpha\ge2$, then the existence of $P$, $f$ and $\mathcal B$ follows from Lemma~\ref{l:Sigma-Polishable}. For $\alpha=1$, we can take $P=A$, $f:P\to A$ be the identity function and $\mathcal B$ be any base of the topology of $A$.
\end{proof}

The following lemma establishes the Polishability of the families $\mathbf \Pi^0_\alpha(X)$ for $\alpha\ge 2$.

\begin{lemma}\label{l:Pi} For every countable ordinal $\alpha\ge 2$ and set $A\in\mathbf\Pi^0_\alpha(X)$, there exist a Polish space $P$ and a continuous bijective map $f:P\to A$ such that $\{f[B]:B\in\mathcal B\}\subseteq\mathbf\Pi^0_\alpha(X)$ for some base $\mathcal B$ of the topology of $P$.
\end{lemma}

\begin{proof} Given any set $A\in\mathbf\Pi^0_\alpha(X)$, find a decreasing  sequence of sets $\{A_n\}_{n\in\w}\subseteq\bigcup_{\xi<\alpha}\mathbf\Sigma^0_\xi(X)$ such that $A=\bigcap_{n\in\w}A_n$. By Lemma~\ref{l:Sigma-Poli}, for every $n\in\w$ there exists a Polish space $P_n$ and a continuous bijective function $f_n:P_n\to A_n$ such that $\{f_n[B]:B\in\mathcal B_n\}\subseteq\bigcup_{\xi<\alpha}\mathbf\Sigma^0_\alpha(X)\subseteq\mathbf\Pi^0_\alpha(X)$ for some base $\mathcal B_n\ni P_n$ of the topology of $P_n$.

Consider the closed subspace
$$P=\{(x_n)_{n\in\w}\in\prod_{n\in\w}P_n:\forall n,m\in\w\quad f_n(x_n)=f_m(x_m)\}$$of the Polish space $\prod_{n\in\w}P_n$ and the continuous bijective map 
$$f:P\to A,\quad f:(x_n)_{n\in\w}\mapsto f_0(x_0).$$ Endow the Polish space $P$ with the base
$$\mathcal B=\Big\{P\cap\prod_{n\in\w}B_n:(B_n)_{n\in\w}\in\prod_{n\in\w}\mathcal B_n,\quad|\{n\in\w:B_n\ne P_n\}|<\w\Big\}.$$ For every $B\in\mathcal B$, there exists a sequence $(B_n)_{n\in\w}\in\prod_{n\in\w}\mathcal B_n$ such that the set $F=\{n\in\w:B_n\ne P_n\}$ is finite and $B=P\cap\prod_{n\in\w}B_n$. We claim that $f[B]=A\cap \bigcap_{n\in F}f_n[B_n]\in\mathbf\Pi^0_\alpha(X)$. 

Indeed, for any $x=(x_n)_{n\in\w}\in B$ and $n\in\w$ we have $f(x)=f_n(x_n)\in f_n[P_n]=A_n$ and hence $f(x)\in\bigcap_{n\in\w}A_n=A$. Also for any $n\in F$ we have $x_n\in B_n$ and hence $f(x)=f_n(x_n)\in f_n[B_n]$. Therefore, $f(x)\in A\cap\bigcap_{n\in F}f_n[B_n]$ and $f[B]\subseteq A\cap\bigcap_{n\in F}f_n[B_n]$.

On the other hand, take any $y \in A\cap\bigcap_{n\in F}f_n[B_n]$ and for every $n\in \w$ choose a point $x_n\in f_n^{-1}(y)$ such that $x_n\in B_n$ if $n\in F$. It follows that the sequence $x=(x_n)_{n\in\w}$ belongs to the basic set $B$ and hence $y=f(x)\in f[B]$. This completes the proof of the equality 
 $f[B]=A\cap\bigcap_{n\in F}[B_n]$.
 
 It follows from $\{A\}\cup\{f_n[B_n]:n\in F\}\subseteq\mathbf \Pi^0_\alpha(X)$  that $f[B]=A\cap\bigcap_{n\in F}f_n[B_n]\in\mathbf \Pi^0_\alpha(X)$.
\end{proof}

Lemma~\ref{l:Pi} implies that for every countable ordinal $\alpha\ge 2$ the Borel class $\mathbf\Pi^0_\alpha(X)$ is Polishable. It remains to prove the Polishability of the class $\mathbf\Pi^0_1(X)$ of closed subsets of $X$. Given any set $A\in\mathbf \Pi^0_1(X)$, we conclude that the space $A$ is Polish and hence is the image of a zero-dimensional Polish space $P$ under a closed continuous map $f:P\to X$ (see \cite[4.5.9]{Eng}). Since the space $P$ is zero-dimensional, the family $\mathcal B$ of all clopen sets in $P$ is a base of the topology of $P$. Since the map $f$ is closed, for every $B\in\mathcal B$ the image $f[B]$ is a closed subset of $A$ and hence $f[B]\in\mathbf\Pi^0_1(A)\subseteq\mathbf\Pi^0_1(X)$.
\end{proof}

\begin{remark} By definition, every Polishable family of subsets of a functionally Hausdorff space $X$ consists of analytic subsets of $X$. So, $\mathbf \Sigma^1_1(X)$ is the largest Polishable family on $X$.
\end{remark}

It would be interesting to find examples of non-metrizable spaces $X$ whose algebra of Borel subsets is $\{\emptyset\}$-winning. One of possible candidates are fragmentable compact spaces.
 
Let us recall \cite[5.0.1]{Fab} that a compact Hausdorff space $X$ is {\em fragmentable} if there exists a metric $\rho$ on $X$ such that for every $\e>0$, every nonempty subset $A\subseteq X$ contains a nonempty relatively open subset of $\rho$-diameter $<\e$.

\begin{problem} Let $X$ be a fragmentable compact Hausdorff space and $\I=\{\emptyset\}$. Is the family of Borel subsets of $X$ $\I$-winning?
\end{problem}

\section{Main Technical Result} 

In this section we prove the main technical result of this paper, Theorem~\ref{t:main}. It involves a general version of the Set-Cover game $\Game_{\mathcal S,\C}$, which is played by two players $\mathsf{S}$ and $\mathsf{C}$ on a set $X$, endowed with two families of subsets $\mathcal S$ and  $\mathcal C$ such that $X\in\mathcal S\cap\C$. 

The player $\mathsf S$ starts the game choosing a  set $S_0\in\mathcal S$ and the player $\mathsf C$ answers suggesting a countable cover $\mathcal C_0\subseteq\mathcal C$ of $S_0$. At the $n$-th inning player $\mathsf S$ selects a  set $S_n\in\mathcal S$ with $S_n\prec \mathcal C_{n-1}$ and player $\mathsf C$ answers with a  countable cover $\mathcal C_n\subseteq\mathcal C$ of the set $S_n$. At the end of the game, the player $\mathsf C$ is declared the winner if the intersection $\bigcap_{n\in\w}S_n$ is not empty. Otherwise the player $\mathsf S$ wins the game. This game is called the {\em Set-Cover game} and is denoted by $\Game_{\mathcal S,\C}$.

A {\em strategy} of the player $\mathsf C$ in the game $\Game_{\mathcal S,\mathcal C}$ is a function $\strategy\colon{\mathcal S}^{<\w}\to[\mathcal C]^{\le\w}$ such that  $\{X\}=\strategy(\emptyset)$ and $S_n=\bigcup\strategy(S_0,\dots,S_n)$ for every nonempty finite sequence $(S_0,\dots,S_n)\in\mathcal S^{<\w}$ such that $S_i\prec\strategy(S_0,\dots,S_{i-1})$ for all $i\le n$. 

Let $\strategy\colon{\mathcal S}^{<\w}\to[\mathcal C]^{\le\w}$ be a strategy of the player $\mathsf C$ in the game  $\Game_{\mathcal S,\mathcal C}$. A sequence $(S_n)_{n\in\w}\in\mathcal S^\w$ is called $\strategy$-{\em admissible} if $S_n\prec \strategy(S_0,\dots,S_{n-1})$ for every $n\in\IN$.

A strategy $\strategy\colon\mathcal S^{<\w}\to[\mathcal C]^{\le\w}$ is {\em winning} if for any $\strategy$-admissible sequence $(S_n)_{n\in\w}\in\mathcal S^\w$, the intersection $\bigcap_{n\in\w} S_n$ is not empty.

\begin{theorem}\label{t:main} Let $\I$ be a $\sigma$-ideal on a set $X$ and $\mathcal S,\mathcal C$ be two families of subsets of a set $X\in\s\cap\C$ such that the player $\mathsf C$ has a winning strategy in the game $\Game_{\mathcal S\setminus\I,\mathcal C}$. For every point-finite subfamily $\mathcal J\subseteq\I$ of cardinality $0<|\mathcal J|\le\mathfrak c$, there exists a Cantor scheme $(\mathcal J_s)_{s\in 2^{<\w}}$ with $\mathcal J_\emptyset=\mathcal J$ that has the following properties.
\begin{enumerate}
\item For any set $S\subseteq X$ with $S\cap\bigcup\J\notin\I$, the set $\{t\in 2^{<\w}: S\cap \bigcup\J_t\notin\I\}$ is a perfect subtree of the tree $2^{<\w}$.
\item For any $\sigma\in 2^{<\w}$ and $S\in\mathcal S$ with $S\cap \bigcup\mathcal J_\sigma\notin\I$, there exist a sequence $s\in 2^{<\w}_\sigma$ and a set $C\in\C(S)$ with $C\cap \bigcup\J_s\notin\I$ such that the set $\{t\in 2^{<\w}_s:\mathcal S(C\cap \bigcup\J_t)\not\subseteq\I\}$ is a chain in the tree $2^{<\w}$.
\item For any $\sigma\in 2^{<\w}$ and $S\in\mathcal S$ with $S\cap \bigcup\mathcal J_\sigma\notin\I$, there exist a sequence $s\in 2^{<\w}$ and a set $C\in\C(S)$ such that $\sigma\subset s$, $C\cap\bigcup\J_s\notin\I$ and $\mathcal S(C\cap\bigcup\J_s)\subseteq\I$.
\item For any $\sigma\in 2^{<\w}$ and $S\in\mathcal S(\bigcup\mathcal J_\sigma)\setminus\I$ there exist a sequence $s\in 2^{<\w}_\sigma$ and a set $C\in\C(S\cap \bigcup\J_s)\setminus\I$ such that the set $\{t\in 2^{<\w}_s:\mathcal S(C\cap \bigcup\J_t)\not\subseteq\I\}$ is a chain in the tree $2^{<\w}$.
\item For any $\sigma\in 2^{<\w}$ and $S\in\mathcal S(\bigcup\mathcal J_\sigma)\setminus\I$ there exist a sequence $s\in 2^{<\w}_\sigma$ and a set $C\in\C(S\cap\bigcup\J_s)\setminus\I$ such that either $\mathcal S(C\setminus J)\subseteq\I$ for some $J\in\J$ or $\mathcal S(C\cap \bigcup\J_t)\subseteq\I$ for any sequence $t\in 2^{<\w}$ with $s\subset t$.
\item If the family $\mathcal S$ is multiplicative and $\I$-Lindel\"of, then there exists a decreasing sequence $(\Sigma_n)_{n\in\w}\in(\sigma\mathcal S)^\w$ such that 
\begin{enumerate}
\item for every $n\in\w$ and $s\in 2^n$ we have $\mathcal S(\bigcup\J_s\setminus\Sigma_n)\subseteq\I$;
\item for every $S\in\mathcal S\setminus\I$ there exist $C\in\C(S)\setminus\I$ such that either $\mathcal S(C\cap \Sigma_n)\subseteq\I$ for some $n\in\w$ or $\mathcal S(C\setminus J)\subseteq\I$ for some $J\in\J$.
\end{enumerate}   
\end{enumerate}
\end{theorem}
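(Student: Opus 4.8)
The plan is to realise the Cantor scheme through a labelling. Since $0<|\mathcal J|\le\mathfrak c$, I would fix an injection $\varphi\colon\mathcal J\to 2^\w$ and, for $s\in 2^{<\w}$, set $\mathcal J_s=\{J\in\mathcal J:s\subset\varphi(J)\}$; this is a Cantor scheme with $\mathcal J_\emptyset=\mathcal J$, and conversely every Cantor scheme arises from such a $\varphi$. The one fact I would extract at the outset is a \emph{localisation lemma}: for every branch $b\in 2^\w$ one has $\bigcap_{n\in\w}\bigcup\mathcal J_{b{\restriction}n}=\bigcup\{J\in\mathcal J:\varphi(J)=b\}$, which by injectivity of $\varphi$ is a single member of $\mathcal J$ (or empty), hence lies in $\I$. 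Point-finiteness is essential here: a point $x$ lying in $\bigcup\mathcal J_{b{\restriction}n}$ for every $n$ sits in sets $J_n\ni x$ with $\varphi(J_n)\supseteq b{\restriction}n$, and as $x$ belongs to only finitely many members of $\mathcal J$ these $J_n$ stabilise to a single $J$ with $\varphi(J)=b$.

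Granting this, statement (1) needs no game. If $T_S=\{t:S\cap\bigcup\mathcal J_t\notin\I\}$ failed to be perfect above some $t_0\in T_S$, then above $t_0$ it would be a single branch $b$. Writing $S\cap\bigcup\mathcal J_{t_0}$ as the union of $S\cap\bigcup\{J:\varphi(J)=b\}$ (at most one set of $\mathcal J$, by injectivity, so in $\I$) together with the countably many off-branch pieces $S\cap\bigcup\mathcal J_{(b{\restriction}n)^{\frown}(1-b(n))}$ (each in $\I$, since its node lies outside $T_S$), the $\sigma$-ideal property would force $S\cap\bigcup\mathcal J_{t_0}\in\I$, a contradiction. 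Hence $T_S$ is perfect.

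The heart of the argument is (2) and its relative (4), where the winning strategy $\strategy$ of $\mathsf C$ enters through a fusion. Starting from $S_0=S$, let $\mathsf C$ answer with $\strategy(S_0)$; since $\bigcup\strategy(S_0)=S$ and $\I$ is a $\sigma$-ideal, I can pick a cover set $C_0\subseteq S$ with $C_0\cap\bigcup\mathcal J_\sigma\notin\I$. If $\{t\supseteq\sigma:\mathcal S(C_0\cap\bigcup\mathcal J_t)\not\subseteq\I\}$ is already a chain we are done; otherwise it branches, so $\mathsf S$ may legally play some $S_1\in\mathcal S\setminus\I$ with $S_1\subseteq C_0\cap\bigcup\mathcal J_{u_1}$ for a node $u_1\supsetneq\sigma$, and the process repeats. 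If the chain never appears, this yields an infinite $\strategy$-admissible play with nodes $\sigma=u_0\subsetneq u_1\subsetneq\cdots$ converging to a branch $b$; the winning property produces $x\in\bigcap_n S_n$, and the localisation lemma confines $x$ to the single set $J_b$. \textbf{The main obstacle is exactly here}: I must turn this point, via point-finiteness, into the conclusion that the surviving $\mathcal S$-positive directions through a terminal cover set line up along one branch, so that off-branch $\mathcal S$-positivity is excluded. The intended device is to run the fusion \emph{greedily}, extending each node as far as the strategy allows before branching, so that any incomparable direction the strategy still permits must pass through one of the finitely many members of $\mathcal J$ witnessed at $x$; point-finiteness then caps the admissible incomparable directions and collapses the positivity tree to a chain.

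Once (2) is in hand, (3) follows from (1): fix $s\supseteq\sigma$ and $C\in\mathcal C(S)$ as in (2), with $L(C)=\{t\supseteq s:\mathcal S(C\cap\bigcup\mathcal J_t)\not\subseteq\I\}$ a chain; by (1) the tree $\{t\supseteq s:C\cap\bigcup\mathcal J_t\notin\I\}$ is perfect, so it has a node $s'\supsetneq\sigma$ lying outside the chain $L(C)$, and this $s'$ satisfies $C\cap\bigcup\mathcal J_{s'}\notin\I$ together with $\mathcal S(C\cap\bigcup\mathcal J_{s'})\subseteq\I$. Statement (4) is the same fusion opened under the restriction $S\subseteq\bigcup\mathcal J_\sigma$, and (5) is deduced from (4) by the same perfect-tree-versus-chain dichotomy: if the chain of (4) is a full branch $b$ the positivity concentrates in $J_b$, giving $\mathcal S(C\setminus J)\subseteq\I$ with $J=J_b$, while if the chain terminates we land in the second alternative $\mathcal S(C\cap\bigcup\mathcal J_t)\subseteq\I$ for all $t\supsetneq s$. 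Finally, for (6) I would use the $\I$-Lindel\"of property to choose, for each level $n$, a countable $\F\subseteq\mathcal S$ inside $\bigcup_{s\in 2^n}\bigcup\mathcal J_s=\bigcup\mathcal J$ absorbing all $\mathcal S$-positivity modulo $\I$, and set $\Sigma_n\in\sigma\mathcal S$ to be the corresponding unions, made decreasing by intersecting successive stages and using multiplicativity of $\mathcal S$; clause (a) then holds by construction, and clause (b) is the $\Sigma_n$-version of the dichotomy in (5), obtained by the same fusion with every $\bigcup\mathcal J_t$ replaced by $\Sigma_n$.
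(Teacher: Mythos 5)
There is a genuine gap, and it sits exactly where you flagged ``the main obstacle.'' Two interlocking issues. First, the injection must not be arbitrary: the paper sends $\J$ injectively into a subspace $\mathbb B\subseteq 2^\w$ of cardinality $\mathfrak c$ that \emph{contains no uncountable compact subsets} (this is the only place the hypothesis $|\J|\le\mathfrak c$ does real work). With an arbitrary injection $\varphi\colon\J\to 2^\w$ the resulting Cantor scheme need not satisfy (2)--(6); the theorem only asserts the existence of \emph{some} good scheme, and the choice of target set is what makes it good. Second, your fusion for (2) runs a single play along one greedily extended branch $b$, producing one point $x\in\bigcap_n S_n$ confined to one set $J_b$. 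Point-finiteness at $x$ caps the members of $\J$ containing $x$, but it says nothing about an $\I$-positive set $D\in\mathcal S(C\cap\bigcup\J_{t'})$ sitting at a node $t'$ incomparable with your branch: such a $D$ lives in sets of $\J$ disjoint from $J_b$ and need not contain $x$ at all. No single play can exclude off-branch positivity. The correct argument is by contradiction: assuming the chain conclusion fails for \emph{every} admissible pair $(s,C)$, one can branch in two incomparable directions at every stage and build a full binary tree of $\strategy$-admissible plays indexed by $2^{<\w}$; each branch $s\in 2^\w$ then yields (via the winning strategy and point-finiteness) a unique $J_s\in\bigcap_n\J_{t_{s\restriction n}}$, the map $s\mapsto J_s$ is a continuous injection of the compact space $2^\w$, and its image pushed through the injection lands as an uncountable compact subset of $\mathbb B$ --- contradicting the choice of $\mathbb B$. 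Your device has no analogue of this final contradiction because you never chose $\mathbb B$.

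The same gap propagates downstream. For (5) you claim that when the chain of (4) converges to a branch $b$, ``the positivity concentrates in $J_b$,'' i.e.\ $\mathcal S(C\setminus J_b)\subseteq\I$; but membership of $t$ in the chain constrains only sets of the form $\mathcal S(C\cap\bigcup\J_t)$, not sets in $\mathcal S(C\setminus J_b)$, and the paper needs a separate game argument here: assuming both alternatives of (5) fail, it picks $S_0\in\mathcal S(C\setminus J_\infty)\setminus\I$ and runs a play climbing the chain, landing a point that must lie in $J_\infty$ by point-finiteness yet lies in $S_0\subseteq X\setminus J_\infty$. Clause (6)(b) likewise requires its own fusion invoking (5), not merely the bookkeeping you describe. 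Your part (1) is fine (and is a slightly cleaner countable decomposition than the paper's two-point argument), and your reduction of (3) to (1)+(2) matches the paper; but without the compact-free target set and the full-binary-tree contradiction, the core clauses (2), (4), (5), (6) are not proved.
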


\begin{proof}  It is well-known \cite[Theorem 11.4]{JW} that the Cantor cube $2^\w$ contains a (Bernstein) subset $\IB$ of cardinality continuum such that $\IB$ contains no uncountable compact subsets. Since $|\mathcal J|\le\mathfrak c$, there exists an injective function $\xi:\J\to \IB$.

The topology of the Cantor cube $2^\w$ is generated by the base $(2^\w_s)_{s\in 2^{<\w}}$ consisting of the clopen sets
$$2^\w_s\defeq\{t\in 2^\w:s\subset t\}\mbox{ \ where \ } s\in 2^{<\w}.$$

For every $s\in 2^{<\w}$, let $\J_s=\xi^{-1}[2^\w_s\cap\IB]$. Endow the set $\J$ with the zero-dimensional metrizable topology generated by the base $\{\J_s:s\in 2^{<\w}\}$. Then the map $\xi:\J\to\IB$ become a topological embedding.

 It is clear that $(\J_s)_{s\in 2^{<\w}}$ is a Cantor scheme with $\J=\J_\emptyset$. In the following six lemmas we shall prove that this Cantor scheme has the properties (1)--(6).
 
By our assumption, the player $\mathsf C$ has a winning strategy $\strategy:(\mathcal S\setminus\I)^{<\w}\to[\C]^{\le\w}$ in the Set-Cover game $\Game_{\mathcal S\setminus\I,\C}$.

\begin{lemma}\label{l:1} For any set $S\subseteq X$ with $S\cap\bigcup\J\notin \I$ the set $T=\{t\in 2^{<\w}: S\cap \bigcup\J_t\notin\I\}$ is a perfect subtree of the tree $2^{<\w}$.
\end{lemma}

\begin{proof} First observe that $T$ contains the empty sequence and hence $T\ne\emptyset$. Given any $s\in T$ we should find two incomparable elements in the set $T\cap 2^{<\w}_s$. Consider the set $$U=\bigcup_{t\in 2^{<\w}_s\setminus T}(S\cap\textstyle\bigcup\J_t)\in\I.$$ It follows from $s\in T$ that $S\cap\bigcup\J_s\notin\I$ and hence $S\cap\bigcup\J_s\setminus U\notin\I$. Choose any point $x_0\in S\cap\bigcup\J_s\setminus U$ and find a set $J_0\in\J_s$ with $x_0\in J_0$. Since $J_0\in\J\subseteq\I$, the set $S\cap\bigcup\J_s\setminus(U\cup J_0)$ does not belong to the $\sigma$-ideal $\I$ and hence contains some point $x_1$. For this point find a set $J_1\in\J_s$ such that $x_1\in J_1$. Since $x_1\in J_1\setminus J_0$, the sets $J_0,J_1$ are distinct and hence there exist sequences $t_0,t_1\in 2^{<\w}_s$ such that $J_0\in\J_{t_0}$, $J_1\in\J_{t_1}$ and $\J_{t_0}\cap \J_{t_1}=\emptyset$.  
It follows that $t_0,t_1$ are incomparable elements of the set $T\cap 2^{<\w}_s$.
\end{proof}  

\begin{lemma}\label{l:2} For any $\sigma\in 2^{<\w}$ and $S\in\mathcal S$ with $S\cap \bigcup\mathcal J_\sigma\notin\I$ there exist a sequence $s\in 2^{<\w}_\sigma$ and a set $C\in\C(S)$ such that $C\cap \bigcup\J_s\notin\I$ and the set $\{t\in 2^{<\w}_s:\mathcal S(C\cap \bigcup\J_t)\not\subseteq\I\}$ is a chain in the tree $2^{<\w}$.
\end{lemma}

\begin{proof} To derive a contradiction, assume that there exists a sequence $\sigma\in 2^{<\w}$ and a set $S\in\mathcal S$ such that $S\cap\bigcup\mathcal J_\sigma\notin\I$ and for any $s\in 2^{<\w}_\sigma$ and set $C\in\C(S)$ with $C\cap \bigcup\J_s\notin \I$, the set $\{t\in 2^{<\w}_s:\mathcal S(C\cap\bigcup\J_t)\not\subseteq\I\}$ is not a chain in $2^{<\w}$. 

Since $S\cap\bigcup\J_\sigma\notin\I$, the countable cover $\strategy(S)$ of $S$ contains a set $C\in\C(S)$ such that $C\cap\bigcup\J_\sigma\notin\I$. By our assumption, the set $\{t\in 2^{<\w}_\sigma:\mathcal S(C\cap\bigcup\J_t)\not\subseteq\I\}$ is not a chain and hence it contains some element $t_\emptyset$. For this element the family $\mathcal S(C\cap\bigcup\J_{t_\emptyset})$ contains some set $S_\emptyset\notin\I$.

\begin{claim}\label{cl:ind-2} There exist indexed families $(S_s)_{s\in 2^{<\w}}$ and $(t_s)_{s\in 2^{<\w}}$ such that $S_\emptyset\subseteq S$, $\sigma\subseteq t_\emptyset$ and for every $n\in \IN$ and $s\in 2^n$ the following conditions are satisfied:
\begin{enumerate}
\item[(i)] $S_s\in \mathcal S(S_{s{\restriction}(n-1)}\cap\bigcup\J_{t_s})\setminus\I$;
\item[(ii)] $S_s\prec\strategy(S_{s{\restriction}0},\dots,S_{s{\restriction}(n-1)})$;
\item[(iii)] $t_{s\hat{\;}0}$ and $t_{s\hat{\;}1}$  are two incomparable elements of the poset $2^{<\w}_{t_s}$.
\end{enumerate}
\end{claim}

\begin{proof} Assume that for some $n\in\w$ and all $s\in \bigcup_{k\le n}2^k$ we have constructed the set $S_s$ and the sequence $t_s$ satisfying the inductive conditions. Given any sequence $s\in 2^n$, we are going to construct sets $S_{s\hat{\;}0}, S_{s\hat{\;}1}$ and sequences $t_{s\hat{\;}0}, t_{s\hat{\;}1}$.

By the inductive condition (i), the set $S_s\in\mathcal S$ has $S_s=S_s\cap\bigcup\J_{t_s}\notin\I$ and then  the countable cover $\strategy(S_{s{\restriction}0},\dots,S_s)\subseteq\C$ of set $S_s$ contains a set $C_s\in \C(S_s)$ such that $C_s\cap\bigcup\J_{t_s}\notin\I$.
By our assumption, the set $\{t\in 2^{<\w}_{t_s}:\mathcal S(C_s\cap\J_t)\not\subseteq\I\}$ contains two incomparable sequences $t_{s\hat{\;}0},t_{s\hat{\;}1}$. For every $i\in\{0,1\}$ the family $\mathcal S(C_s\cap\bigcup\J_{t_{s\hat{\;}i}})\setminus\I$ is not empty and hence contains some set $S_{s\hat{\;}i}$. This completes the inductive step.
 \end{proof}

\begin{claim}\label{cl2:l2} For every sequence $s\in 2^\w$ the intersection $\bigcap_{n\in\w}\J_{t_{s{\restriction}n}}$ contains a unique element $J_s$.
\end{claim}

\begin{proof} Consider the sequence $(S_{s{\restriction}n})_{n\in\w}$. The inductive conditions (i) and (ii) of Claim~\ref{cl:ind-2} ensure that this sequence is $\strategy$-admissible. Since the strategy $\strategy$ is winning, the intersection $\bigcap_{n\in\w}S_{s{\restriction}n}$ is not empty and hence contains some point $a_s$.

For every $n\in\w$, the condition (i) in Claim~\ref{cl:ind-2} ensures that $a_{s}\in S_{s{\restriction}n}\subseteq\bigcup\J_{t_{s{\restriction}n}}$ and hence $a_s\in J_n$ for some set $J_n\in \J_{t_{s{\restriction}n}}$. Since the family $\J$ is point-finite, the family $\{J_n:n\in\w\}\subseteq\{J\in\J:a_s\in J\}$ is finite. Then for some set $J_s\in\J$ the set $\Omega=\{n\in\w:J_n=J_s\}$ is infinite. It follows that for every $n\in\Omega$ we have $J_s=J_n\in \J_{t_{s{\restriction}n}}$. Since the sequence $(t_{s{\restriction}n})_{n\in\w}$ is an infinite chain in the tree $2^{<\w}$, the sequence $(\J_{t_{s{\restriction}n}})_{n\in\w}$ is decreasing and its intersection $\bigcap\J_{t_{s{\restriction}n}}$ coincides with the singleton $\{J_s\}$. 
\end{proof}

The condition (iii) of Claim~\ref{cl:ind-2} and the definition of the topology on the space $\J$ imply that the map 
$$J_*:2^\w\to \J,\quad J_*:s\mapsto J_{\sigma},$$
is continuous and injective. By the compactness of $2^\w$, the map $J_*$ is a topological embedding. Then $\J$ contains the uncountable compact set $J_*[2^\w]$ and the space $\IB$ contains the uncountable compact set $\xi[J_*[2^\w]]$, which is a desirable contradiction completing the proof of Lemma~\ref{l:2}. 
\end{proof}

\begin{lemma}\label{l:3} For any $\sigma\in 2^{<\w}$ and $S\in\mathcal S$ with $S\cap \bigcup\mathcal J_\sigma\notin\I$ there exist a sequence $t\in 2^{<\w}$ and set $C\in \C$ such that $\sigma\subset t$, $C\cap\bigcup\J_t\notin\I$ and $\mathcal S(C\cap\bigcup\J_t)\subseteq\I$.
\end{lemma}

\begin{proof} By Lemma~\ref{l:2}, there exists a sequence $s\in 2^{<\w}_\sigma$ and set $C\in\C(S)$ such that $C\cap\bigcup\J_s\notin\I$ and the set $L=\{t\in 2^{<\w}_s:\mathcal S(C\cap \bigcup\J_t)\not\subseteq\I\}$ is a chain in the tree $2^{<\w}$. By Lemma~\ref{l:1}, the set $T=\{t\in 2^{<\w}:C\cap\bigcup\J_t\notin\I\}$ is a prefect subtree of $2^{<\w}$. Since $s\in T$, there exists a sequence $t\in T\setminus L$ such that $s\subset t$. It follows from $t\in T\setminus L$ that $C\cap\bigcup\J_t\notin\I$ and $\mathcal S(C\cap\bigcup\J_t)\subseteq\I$.
\end{proof}

\begin{lemma}\label{l:4} For any $\sigma\in 2^{<\w}$ and $S\in\mathcal S(\bigcup\mathcal J_\sigma)\setminus\I$ there exist a sequence $s\in 2^{<\w}_\sigma$ and a set $C\in\C(S\cap \bigcup\J_s)\setminus\I$ such that the set $\{t\in 2^{<\w}_s:\mathcal S(C\cap \bigcup\J_t)\not\subseteq\I\}$ is a chain in the tree $2^{<\w}$.
\end{lemma}

\begin{proof} To derive a contradiction, assume that there exists a sequence $\sigma\in 2^{<\w}$ and a set $S\in\mathcal S(\bigcup\mathcal J_\sigma)\setminus\I$ such that for any $s\in 2^{<\w}_\sigma$ and $C\in\C(S\cap\bigcup\J_s)\setminus \I$, the set $\{t\in 2^{<\w}_s:\mathcal S(C\cap\bigcup\J_t)\not\subseteq\I\}$ is not a chain in $2^{<\w}$. 

\begin{claim}\label{cl:l4} There exist indexed families $(S_s)_{s\in 2^{<\w}}$ and $(t_s)_{s\in 2^{<\w}}$ such that $S_\emptyset\subseteq S$, $\sigma\subseteq t_\emptyset$ and for every $n\in \IN$ and $s\in 2^n$ the following conditions are satisfied:
\begin{enumerate}
\item[(i)] $S_s\in \mathcal S(S_{s{\restriction}(n-1)}\cap\bigcup\J_{t_s})\setminus\I$;
\item[(ii)] $S_s\prec\strategy(S_{s{\restriction}0},\dots,S_{s{\restriction}(n-1)})$;
\item[(iii)] $t_{s\hat{\;}0}$ and $t_{s\hat{\;}1}$  are two incomparable elements of the poset $2^{<\w}_{t_s}$.
\end{enumerate}
\end{claim}

\begin{proof} To start the inductive construction, use the choice of the sequence $\sigma$ and for the set $C=S$ find a sequence $t_\emptyset\in 2^{<\w}_\sigma$ such that the family $\mathcal S(C\cap\bigcup\J_{t_\emptyset})\setminus\I$ is not empty and hence contains some set $S_\emptyset$. 

Now assume that for some $n\in\w$ and all $s\in \bigcup_{k\le n}2^k$ we have constructed the set $S_s$ and the sequence $t_s$ satisfying the inductive conditions. Given any sequence $s\in 2^n$, we are going to construct sets $S_{s\hat{\;}0}, S_{s\hat{\;}1}$ and sequences $t_{s\hat{\;}0}, t_{s\hat{\;}1}$.

Since $S_s\notin\I$, the countable cover $\strategy(S_{s{\restriction}0},\dots,S_s)\subseteq\C$ of set $S_s$ contains a set $C_s\in \C(S_s)\setminus\I\subseteq\C(S\cap\bigcup\J_{t_s})\setminus\I$.

By our assumption, the set $\{t\in 2^{<\w}_{t_s}:\mathcal S(C_s\cap\bigcup\J_t)\not\subseteq\I\}$ contains two incomparable sequences $t_{s\hat{\;}0},t_{s\hat{\;}1}$. For every $i\in\{0,1\}$ the family $\mathcal S(C_s\cap\J_{t_{s\hat{\;}i}})\setminus\I$ is not empty and hence contains some set $S_{s\hat{\;}i}$. This completes the inductive step.
 \end{proof}

Repeating the arguments of Claims~\ref{cl:ind-2} and \ref{cl2:l2},  we can show that for every sequence $s\in 2^\w$ the intersection $\bigcap_{n\in\w}\J_{t_{s{\restriction}n}}$ contains a unique element $J_s$, and the function $J_*:2^\w\to\J$, $J^*:s\mapsto J_s$, is injective and continuous. Then $\xi\circ J_*[2^\w]$ is an uncountable compact subspace of the space $\IB$, which contradicts the choice of $\IB$.
\end{proof}

\begin{lemma}\label{l:5} For any $\sigma\in 2^{<\w}$ and $S\in\mathcal S(\bigcup\mathcal J_\sigma)\setminus\I$, there exist a sequence $s\in 2^{<\w}_\sigma$ and a set $C\in\C(S\cap\bigcup\J_s)\setminus\I$ such that either $\mathcal S(C\setminus J)\subseteq\I$ for some $J\in\J$ or $\mathcal S(C\cap \bigcup\J_t)\subseteq\I$ for any sequence $t\in 2^{<\w}$ with $s\subset t$.
\end{lemma}

\begin{proof} To derive a contradiction, assume there exist $\sigma\in 2^{<\w}$ and $S\in\mathcal S(\bigcup\mathcal J_\sigma)\setminus\I$ such that for any  $s\in 2^{<\w}_\sigma$ and $C\in\C(S\cap\bigcup\J_s)\setminus\I$ we have: $\mathcal S(C\setminus J)\not\subseteq\I$ for all $J\in\J$, and $\mathcal S(C\cap \bigcup\J_t)\not\subseteq\I$ for some sequence $t\in 2^{<\w}$ with $s\subset t$.

By Lemma~\ref{l:4}, there exists a sequence $s\in 2^{<\w}_\sigma$ and a set $C\in\mathcal C(S\cap\bigcup\J_s)\setminus\I$ such that the set $L=\{t\in 2^{<\w}_s:\mathcal S(C\cap\bigcup\J_t)\not\subseteq\I\}$ is a chain in the tree $2^{<\w}$.
Our assumption guarantees that the chain $L$ is infinite, which implies that $\bigcap_{t\in L}\J_t$ is either empty or a singleton. In any case we can find a set $J_\infty\in\J$ such that $\bigcap_{t\in L}\J_t\subseteq\{J_\infty\}$. By our assumption the family $\mathcal S(C\setminus J_\infty)\setminus\I$ contains some set $S_0$. Let $t_0=s$.

Inductively we shall construct sequences $(S_n)_{n\in\IN}$ and $(t_n)_{n\in\IN}$ such that for every $n\in\IN$ the following conditions are satisfied:
\begin{enumerate}
\item[(i)] $S_n\subseteq \mathcal S(S\cap S_{n-1}\cap\bigcup\J_{t_n})\setminus\I$;
\item[(ii)] $S_n\prec\strategy(S_0,\dots,S_{n-1})$;
\item[(iii)] $t_{n-1}\subset t_n$.
\end{enumerate}

Assume that for some $n\in\IN$ a sequence $t_{n-1}\supset t_0=s$ and a set $S_{n-1}\in\mathcal S(S\cap \bigcup\J_{t_{n-1}})\setminus\I$ have been constructed. Since $S_{n-1}\notin\I$, the countable cover $\strategy(S_0,\dots,S_{n-1})$ of $S_{n-1}$ contains a set $C_n\in\C(S_{n-1})\setminus\I$. Since $C_n\in\mathcal C(S\cap\bigcup \J_{t_{n-1}})\setminus\I$, our assumption yields a sequence $t_n\in 2^{<\w}$ such that $t_{n-1}\subset t_n$ and the family $\mathcal S(C_n\cap \bigcup\J_{t_n})\setminus\I$ contains some set $S_n$. This completes the inductive step.
\smallskip

Since the strategy $\strategy$ is winning, the intersection $\bigcap_{n\in\w}S_n$ contains some point $x$. For every $n\in\w$ the inductive condition (i) implies $x\in S_n\subseteq\bigcup\J_{t_n}$ and hence $x\in J_n$ for some $J_n\in\J_{t_n}$. Since the family $\J$ is point-finite, the family $\{J_n:n\in\w\}\subseteq\{J\in\J:x\in J\}$ is finite. Consequently, there exists $J_\infty'\in\J$ such that the set $\Omega=\{n\in\w:J_n=J_\infty'\}$ is infinite. Observe that for every $n\in\Omega$ we have $J'_\infty=J_n\in\J_{t_n}$, which implies that $\{J'_\infty\}=\bigcap_{n\in\Omega}\J_{t_n}=\bigcap_{n\in\w}\J_{t_n}=\{J_\infty\}$ and hence $J'_\infty=J_\infty$. Then $x\in J'_\infty=J_\infty$. On the other hand, $x\in S_0\subseteq X\setminus J_\infty$ by the choice of the set $S_0$. This contradiction completes the proof of Lemma~\ref{l:5}.
\end{proof}

\begin{lemma}\label{l:6}If the family $\mathcal S$ is multiplicative and $\I$-Lindel\"of, then there exists a decreasing sequence $(\Sigma_n)_{n\in\w}\in(\sigma\mathcal S)^\w$ such that 
\begin{enumerate}
\item[(a)] for every $n\in\w$ and $s\in 2^n$ we have $\mathcal S(\bigcup\J_s\setminus\Sigma_n)\subseteq\I$;
\item[(b)] for every $S\in\mathcal S\setminus\I$ there exist $C\in\C(S)\setminus\I$ such that either $\mathcal S(C\cap \Sigma_n)\subseteq\I$ for some $n\in\w$ or $\mathcal S(C\setminus J)\subseteq\I$ for some $J\in\J$.
\end{enumerate}   
\end{lemma}

\begin{proof} By the $\I$-Lindel\"of property of the family $\mathcal S$, for every $s\in 2^{<\w}$ there exists a set $\Sigma_s\subseteq \sigma\mathcal S$ such that $\Sigma_s\subseteq \bigcup\J_s$ and $\mathcal S(\bigcup\J_s\setminus \Sigma_s)\subseteq\I$.

For every $n\in\w$ let 
$$\Sigma_{n}=\bigcup_{m=n}^\infty\bigcup_{s\in 2^m}\Sigma_s\in\sigma\mathcal S.$$ It is clear that the sequence $(\Sigma_n)_{n\in\w}$ is decreasing. 

Observe that for every $n\in\w$ and $s\in 2^n$ we have $\Sigma_s\subseteq \Sigma_n$ and hence 
$$\textstyle{\mathcal S(\bigcup\J_s\setminus \Sigma_n)\subseteq \mathcal S(\bigcup\J_s\setminus \Sigma_s)\subseteq\I,}$$by the choice of the set $\Sigma_s$. This shows that the condition (a) holds. It remains to verify the condition (b).

 Assume that this condition does not hold, which means that there exists $S\in \mathcal S\setminus\I$ such that for any $C\in\mathcal C(S)\setminus\I$, $n\in\w$ and $J\in\J$ we have $\mathcal S(C\cap \Sigma_n)\not\subseteq\I$ and $\mathcal S(C\setminus J)\not\subseteq \I$. The last condition implies that $\mathcal S(C)\not\subseteq\I$ for any set $C\in\C(S)\setminus\I$.
 
Let $S_0=S$ and $s_0=\emptyset$. We inductively construct sequences $(S_n)_{n\in \IN}$, $(s_n)_{n\in \IN}$  such that for every $n\in\IN$ the following conditions are satisfied:
\begin{enumerate}
\item[(i)] $|s_n|>|s_{n-1}|$;
\item[(ii)] $S_n\in \mathcal S(S\cap S_{n-1}\cap\Sigma_{s_n})\setminus\I$;
\item[(iii)] $S_n\prec \strategy(S_0,\dots,S_{n-1})$;
\item[(iv)] $\mathcal S(S_n\cap\bigcup \J_t)\subseteq\I$ for any $t\in 2^{<\w}$ with $s_n\subset t$.
\end{enumerate}
 
Assume that for some $n\in\IN$ we have constructed sets $S_0,\dots,S_{n-1}$ and sequences\break $s_0,\dots,s_{n-1}$ satisfying the inductive conditions (i)--(iv).  Since $S_{n-1}\notin\I$, the countable cover $\strategy(S_0,\dots,S_{n-1})\subseteq\C$ of $S_{n-1}$ contains a set $C_n\in\C(S_{n-1})\setminus\I$. Let $m_n=1+|s_{n-1}|$. By the choice of the set $S$, the family $\mathcal S(C_n\cap\Sigma_{m_n})\setminus\I$ contains some set $S_n'$. Since $S_n'=S'_n\cap \Sigma_{m_n}=\bigcup\{S_n'\cap \Sigma_t:t\in 2^{<\w},\;|t|\ge m_n\}$, there exists $t_n\in 2^{<\w}$ such that $|t_n|\ge m_n$ and $S_n'\cap \Sigma_{t_n}\notin\I$. Since the family $\mathcal S$ is multiplicative and $\Sigma_{t_n}\in\sigma\mathcal S$, there exists a set $S_n''\in\mathcal S(S_n'\cap\Sigma_{t_n})\setminus\I$. Since $S_n''\in \mathcal S(\bigcup\J_{t_n})\setminus\I$, we can apply Lemma~\ref{l:5} and find a sequence $s_n\in 2^{<\w}_{t_n}$ and a set $C'_n\in\C(S_n''\cap\bigcup\J_{s_n})\setminus\I$ such that either $\mathcal S(C'_n\setminus J)\subseteq\I$ for some $J\in\J$ or $\mathcal S(C'_n\cap\bigcup\J_{t})\subseteq\I$ for any $t\in 2^{<\w}$ with $s_n\subset t$. By the choice of the set $S$, the first alternative is impossible. Consequently,   $\mathcal S(C'_n\cap\bigcup\J_{t})\subseteq\I$ for any $t\in 2^{<\w}$ with $s_n\subset t$. By the choice of the set $S$, the family $\mathcal S(C_n')\setminus\I$ contains some set $S_n$. It is easy to see that $S_n$ and $s_n$ satisfy the inductive conditions (i)--(iv).
\smallskip

Since the strategy $\strategy$ is winning, the intersection $\bigcap_{n\in\w}S_n$ contains some point $x$. For every $n\in\w$ we have $x\in S_n\subseteq\bigcup\J_{s_n}$ and hence $x\in J_n$ for some $J_n\in\J_{s_n}$. Since the family $\J$ is point-finite, the set $\{J_n:n\in\w\}\subseteq\{J\in\J:x\in J\}$ is finite. Consequently,  there exists $J\in\J$ such that the set $\Omega=\{n\in\w:J=J_n\}$ is infinite. Choose any numbers $n<m$ in $\Omega$ and observe that $J=J_n=J_m\in \J_{s_n}\cap\J_{s_m}$ and $|s_n|<|s_m|$ imply $s_n\subset s_m$. Then $S_m\in\mathcal S(S_n\cap \Sigma_{s_m})\setminus\I\subseteq\mathcal S(S_n\cap \bigcup\J_{s_m})\setminus\I$, which contradicts the inductive condition (iv).
\end{proof}
\end{proof}

\section{Some implications of the main Theorem~\ref{t:main}}\label{s:=>}

Theorem~\ref{t:main} and the definition of an $\I$-winning family imply the following theorem.

\begin{theorem}\label{t:main-W} Let $\I$ be a $\sigma$-ideal on a set $X$ and $\A$ be an $\I$-winning family of subsets of $X$. For every point-finite subfamily $\mathcal J\subseteq\I$ of cardinality $0<|\mathcal J|\le\mathfrak c$, there exists a Cantor scheme $(\mathcal J_s)_{s\in 2^{<\w}}$ with $\mathcal J_\emptyset=\mathcal J$ that has the following properties.
\begin{enumerate}
\item For any set $S\subseteq X$ with $S\cap\bigcup\J\notin\I$, the set $\{t\in 2^{<\w}: S\cap \bigcup\J_t\notin\I\}$ is a perfect subtree of the tree $2^{<\w}$.
\item For any $\sigma\in 2^{<\w}$ and $S\in\A$ with $S\cap \bigcup\mathcal J_\sigma\notin\I$, there exist a sequence $s\in 2^{<\w}_\sigma$ and a set $C\in\A(S)$ with $C\cap \bigcup\J_s\notin\I$ such that the set $\{t\in 2^{<\w}_s:\A(C\cap \bigcup\J_t)\not\subseteq\I\}$ is a chain in the tree $2^{<\w}$.
\item For any $\sigma\in 2^{<\w}$ and $S\in\A$ with $S\cap \bigcup\mathcal J_\sigma\notin\I$, there exist a sequence $s\in 2^{<\w}$ and a set $C\in\A(S)$ such that $\sigma\subset s$, $C\cap\bigcup\J_s\notin\I$ and $\A(C\cap\bigcup\J_s)\subseteq\I$.
\item For any $\sigma\in 2^{<\w}$ and $S\in\A(\bigcup\mathcal J_\sigma)\setminus\I$ there exist a sequence $s\in 2^{<\w}_\sigma$ and a set $C\in\A(S\cap \bigcup\J_s)\setminus\I$ such that the set $\{t\in 2^{<\w}_s:\A(C\cap \bigcup\J_t)\not\subseteq\I\}$ is a chain in the tree $2^{<\w}$.
\item For any $\sigma\in 2^{<\w}$ and $S\in\A(\bigcup\mathcal J_\sigma)\setminus\I$ there exist a sequence $s\in 2^{<\w}_\sigma$ and a set $C\in\A(S\cap\bigcup\J_s)\setminus\I$ such that either $\A(C\setminus J)\subseteq\I$ for some $J\in\J$ or $\A(C\cap \bigcup\J_t)\subseteq\I$ for any sequence $t\in 2^{<\w}$ with $s\subset t$.
\item If the family $\A$ is multiplicative and $\I$-Lindel\"of, then there exists a decreasing sequence $(\Sigma_n)_{n\in\w}\in(\sigma\A)^\w$ such that 
\begin{enumerate}
\item for every $n\in\w$ and $s\in 2^n$ we have $\A(\bigcup\J_s\setminus\Sigma_n)\subseteq\I$;
\item for every $S\in\A\setminus\I$ there exist $C\in\A(S)\setminus\I$ such that either $\A(C\cap \Sigma_n)\subseteq\I$ for some $n\in\w$ or $\A(C\setminus J)\subseteq\I$ for some $J\in\J$.
\end{enumerate}   
\end{enumerate}
\end{theorem}

We shall apply Theorem~\ref{t:main-W}(3,5) to prove the following extension of Theorem~\ref{t:main1}.

\begin{corollary}\label{c:7.3} Let $\I$ be a $\sigma$-ideal on a set $X$ of cardinality $|X|\le\mathfrak c$ and $\mathcal A$ be an $\I$-winning family of subsets of $X$ such that $X\in\A\setminus\I$. Any point-finite family $\J\subseteq\I$ with  $\bigcup\J\notin\I$ (and $\bigcup\J=X$) contains a subfamily $\J'\subseteq\J$ such that for some set $C\in\A$ we have $C\cap\bigcup\J'\notin\I$ but $\A(C\cap \bigcup\J')\subseteq\I$ (and $\A(C\setminus\bigcup\J')\subseteq\I$).
\end{corollary}

\begin{proof} Let $\J\subseteq\I$ be a point-finite family with $\bigcup\J\notin\I$. Since $\bigcup\J\ne\emptyset$ and $|\bigcup\J|\le|X|\le\mathfrak c$, the point-finite family $\J$ has cardinality $0<|\J|\le\mathfrak c$. 

By Theorem~\ref{t:main-W}, there exists a Cantor scheme $(\J_s)_{s\in 2^{<\w}}$ with $\J_\emptyset=\J$ satisfying the conditions (1)--(6) of Theorem~\ref{t:main-W}. 

Applying Theorem~\ref{t:main-W}(3) to the empty sequence $\sigma=\emptyset\in 2^0$ and set $S=X\in\A$, we can find a sequence $s\in 2^{<\w}$ and a set $C\in\A$ such that $C\cap\bigcup\J_s\notin\I$ and $\A(C\cap\bigcup\J_s)\subseteq\I$. Then the subfamily $\J'\defeq\J_s$ has the required property.

If $X=\bigcup\J$, then applying Theorem~\ref{t:main-W}(5) to the empty sequence $\sigma=\emptyset\in 2^0$ and set $S=X\in\A$, we can find a sequence $s\in 2^{<\w}$ and a set $C\in\A(\J_s)\setminus\I$ such that either $\A(C\setminus J)\subseteq\I$ for some $J\in\J$ or $\A(C\cap\bigcup\J_t)\subseteq\I$ for any sequence $t\in 2^{<\w}$ with $s\subset t$. In the first case we put $\J'\defeq\{J\}$ and obtain that $\A(C\setminus\bigcup\J')=\A(C\setminus J)\subseteq\I$ and $\A(C\cap\bigcup\J')=\A(C\cap J)\subseteq\I$. In the second case, put $\J'\defeq\J_{s\hat{\;}0}$ and observe that $\A(C\cap\bigcup\J')\subseteq\I$ and $\A(C\setminus\bigcup\J_{s\hat{\;}0})\subseteq\A(C\cap\bigcup\J_{s\hat{\;}1})\subseteq\I$.
\end{proof}

Next, we apply Theorem~\ref{t:main-W} to prove two corollaries implying Theorems~\ref{t:measure} and \ref{t:category}, announced in the introduction.

\begin{corollary}\label{c:measure} Let $\mu$ be a non-atomic $\sigma$-additive probability Borel measure on a Borel  space $X$ and $\I\defeq\{A\subseteq X:\mu^*(A)=0\}$. For any point-finite subfamily $\J\subseteq\I$, there exists a Cantor scheme $(\mathcal J_s)_{s\in 2^{<\w}}$ with $\J_\emptyset=\J$ and a decreasing sequence of Borel sets $(A_n)_{n\in\w}$ in $X$ such that
\begin{enumerate}
\item $\lim_{n\to\infty}\mu(A_n)=0$;
\item for any $n\in\w$ and $s\in 2^n$ the set $\bigcup\J_s\setminus A_n$  contains no $\mu$-positive Borel subsets of $X$.
\end{enumerate}
\end{corollary}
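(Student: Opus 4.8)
The plan is to specialize Corollary~\ref{c:main2}(6) to the Borel $\sigma$-algebra $\mathcal B$ of $X$ and then translate its conclusion into a statement about $\mu$, the crucial point being that the second alternative in clause (b) of that corollary can never occur for the null ideal. We may clearly assume $\J\ne\emptyset$, the case $\J=\emptyset$ being witnessed by $A_n=\emptyset$.

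First I would check the hypotheses needed to invoke Corollary~\ref{c:main2}(6). Since $\mu$ is a probability measure, every pairwise disjoint subfamily of $\mathcal B\setminus\I$ (i.e. of $\mu$-positive Borel sets) is at most countable, so $\mathcal B$ is $\I$-ccc and hence $\I$-Lindel\"of by Lemma~\ref{l:ccc}; moreover, as $X$ is a Borel space, $\mathcal B=\mathbf\Delta^1_1(X)$ by Proposition~\ref{p:Borel2}, which is multiplicative and $\I$-winning by Proposition~\ref{p:Borel}. Thus Corollary~\ref{c:main2}(6) supplies a Cantor scheme $(\J_s)_{s\in2^{<\w}}$ with $\J_\emptyset=\J$ and a decreasing sequence $(\Sigma_n)_{n\in\w}\in(\sigma\mathcal B)^\w$ satisfying clauses (a) and (b). As a countable union of Borel sets, each $\Sigma_n$ is Borel, and I set $A_n=\Sigma_n$. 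Property (2) of the statement is then immediate from clause (a): the relation $\mathcal B(\bigcup\J_s\setminus A_n)\subseteq\I$ says precisely that $\bigcup\J_s\setminus A_n$ contains no $\mu$-positive Borel subset, for every $s\in2^n$.

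The substance of the argument is property (1), $\lim_{n\to\infty}\mu(A_n)=0$. The key observation, and the one step I expect to carry the whole proof, is that the alternative ``$\mathcal B(C\setminus J)\subseteq\I$ for some $J\in\J$'' in clause (b) is impossible for the null ideal. Indeed, if $C\in\mathcal B\setminus\I$ and $J\in\J$, then since $\mu^*(J)=0$ we may choose a Borel null hull $N\supseteq J$; now $C\setminus N$ is a Borel subset of $C\setminus J$ with $\mu(C\setminus N)=\mu(C)>0$, so $\mathcal B(C\setminus J)\not\subseteq\I$. Hence clause (b) collapses to: every $S\in\mathcal B\setminus\I$ admits $C\in\mathcal B(S)\setminus\I$ with $\mathcal B(C\cap\Sigma_n)\subseteq\I$, equivalently $\mu(C\cap\Sigma_n)=0$, for some $n\in\w$.

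It remains to deduce $\mu\bigl(\bigcap_{n}A_n\bigr)=0$. Suppose instead that $\Sigma_\infty:=\bigcap_{n}\Sigma_n$ has $\mu(\Sigma_\infty)>0$, so $\Sigma_\infty\in\mathcal B\setminus\I$. Applying the reduced form of clause (b) to $S=\Sigma_\infty$ yields $C\in\mathcal B(\Sigma_\infty)\setminus\I$ and $n\in\w$ with $\mu(C\cap\Sigma_n)=0$; but $C\subseteq\Sigma_\infty\subseteq\Sigma_n$ gives $C\cap\Sigma_n=C$, whence $\mu(C)=0$, contradicting $C\notin\I$. Therefore $\mu\bigl(\bigcap_n A_n\bigr)=0$, and since $(A_n)$ is a decreasing sequence of Borel sets in a finite measure space, continuity of $\mu$ from above gives $\lim_{n\to\infty}\mu(A_n)=\mu\bigl(\bigcap_n A_n\bigr)=0$, completing the proof. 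Once the vacuousness of the $J$-alternative is recognized, the rest is this short contradiction argument together with the standard continuity of measure, so I expect no further obstacle.
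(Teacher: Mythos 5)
Your proposal is correct and takes essentially the same route as the paper: establish that the Borel $\sigma$-algebra is multiplicative, $\I$-winning, $\I$-ccc and hence $\I$-Lindel\"of, apply Corollary~\ref{c:main2}(6), and set $A_n=\Sigma_n$. The paper merely asserts that clause (b) yields statement (1); your two additional observations --- that the alternative ``$\mathcal B(C\setminus J)\subseteq\I$'' is vacuous for the null ideal because each $J\in\J$ has a Borel null hull, and that $\mu\bigl(\bigcap_n A_n\bigr)=0$ then follows by contradiction and continuity of $\mu$ from above --- supply exactly the details the paper leaves implicit.
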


\begin{proof} By Theorem~\ref{t:Polishable} and Propositions~\ref{p:Borel2} and \ref{p:Borel}, the family $\mathbf\Delta^1_1(X)$ of all Borel subsets of $X$ is $\I$-winning. 
Since $\I=\{A\subseteq X:\mu^*(A)=0\}$, the family $\mathbf\Delta^1_1(X)$ is $\I$-ccc and hence $\I$-Lindel\"of. 
By Theorem~\ref{t:main-W}(6), there exists a Cantor scheme $(\J_s)_{s\in 2^{<\w}}$ with $\J_\emptyset=\J$ and a decreasing sequence of Borel sets $(A_n)_{n\in\w}$ such that 
\begin{itemize}
\item[(a)] for any $n\in\w$ and $s\in 2^n$ the set $\bigcup\J_s\setminus A_n$ contains no $\mu$-positive Borel subsets of $X$;
\item[(b)] for any $\mu$-positive Borel set $S\subseteq X$ there exists $n\in\w$ such that $\mu(S\setminus A_n)>0$.
\end{itemize}
The conditions (a) and (b) imply the statements (2) and (1), respectively. 
\end{proof}

\begin{corollary}\label{c:category} Let $\I$ be the $\sigma$-ideal of meager sets in a Polish space $X$. For any point-finite subfamily $\J\subseteq\I$, there exists a Cantor scheme $(\mathcal J_s)_{s\in 2^{<\w}}$ with $\J_\emptyset=\J$ and a decreasing sequence of regular closed sets  $(B_n)_{n\in\w}$ in $X$ such that
\begin{enumerate}
\item for any non-meager Borel set $A\subseteq X$ there exists $n\in\w$ such that $A\setminus B_n$ is non-meager;
\item $\bigcap_{n\in\w}B_n$ is nowhere dense in $X$;
\item for any $n\in\w$ and $s\in 2^n$ the set $\bigcup\J_s\setminus B_n$  contains no non-meager Borel subset of $X$;
\item If $X$ is compact, then for every neighborhood $U\subseteq X$ of $\bigcap_{n\in\w}B_n$ there exists $n\in\w$ such that $B_n\subseteq U$.
\end{enumerate}
\end{corollary}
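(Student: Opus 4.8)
The plan is to specialize Corollary~\ref{c:main2} to the meager ideal $\I$ and then replace the Borel sets $\Sigma_n$ produced there by suitable regular closed sets. First I would note that the family $\mathcal B$ of all Borel subsets of the Polish space $X$ is $\I$-winning: since $X$ is a Borel space with $\mathbf\Delta^1_1(X)=\mathcal B$, this follows from Proposition~\ref{p:polishable} (or Propositions~\ref{p:Borel} and \ref{p:Borel2}) together with Theorem~\ref{t:Polishable}. Next I would check that $\mathcal B$ is $\I$-ccc, hence $\I$-Lindel\"of by Lemma~\ref{l:ccc}: every non-meager Borel set is comeager in some member of a fixed countable base of $X$, so an uncountable disjoint family in $\mathcal B\setminus\I$ would, by the pigeonhole principle, contain two members comeager in the same basic open set, contradicting disjointness. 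Therefore Corollary~\ref{c:main2}(6) supplies a Cantor scheme $(\J_s)_{s\in2^{<\w}}$ with $\J_\emptyset=\J$ and a decreasing sequence of Borel sets $(\Sigma_n)_{n\in\w}$ such that (a) $\bigcup\J_s\setminus\Sigma_n$ contains no non-meager Borel subset for all $n\in\w$ and $s\in2^n$, and (b) every non-meager Borel $S$ contains a non-meager Borel $C$ for which either $\mathcal B(C\cap\Sigma_n)\subseteq\I$ for some $n\in\w$, or $\mathcal B(C\setminus J)\subseteq\I$ for some $J\in\J$.

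I would next argue that the second alternative in (b) never occurs: as $J\in\J$ is meager, it lies in a meager Borel set $M$, whence $C\setminus M$ is a non-meager Borel subset of $C\setminus J$, so $\mathcal B(C\setminus J)\not\subseteq\I$. Thus only the first alternative survives, and since $C\cap\Sigma_n$ is itself Borel, $\mathcal B(C\cap\Sigma_n)\subseteq\I$ forces $C\cap\Sigma_n$ to be meager, so $C\setminus\Sigma_n$ is non-meager. Reading this through $C\subseteq S$ yields the clean statement (b$'$): for every non-meager Borel set $S\subseteq X$ there is $n\in\w$ with $S\setminus\Sigma_n$ non-meager.

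The heart of the argument is the passage to regular closed sets. Each $\Sigma_n$ is Borel, hence has the Baire property, so there is a regular open set $U_n$ with $\Sigma_n\triangle U_n$ meager; I put $B_n=\overline{U_n}$, which is regular closed. Since the $\Sigma_n$ decrease, $U_{n+1}\setminus U_n$ is meager, so the open set $U_{n+1}\setminus\overline{U_n}$ is meager and therefore empty in the Baire space $X$; hence $U_{n+1}\subseteq\mathrm{int}(\overline{U_n})=U_n$ and $B_{n+1}\subseteq B_n$. Moreover $\Sigma_n\triangle B_n\subseteq(\Sigma_n\triangle U_n)\cup\partial U_n$ is meager. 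Property (3) now follows from (a): if $D$ is a Borel subset of $\bigcup\J_s\setminus B_n$, then $D\setminus\Sigma_n$ is a Borel subset of $\bigcup\J_s\setminus\Sigma_n$ and so is meager by (a), while $D\cap\Sigma_n\subseteq\Sigma_n\setminus B_n$ is meager, whence $D$ is meager. Property (1) follows from (b$'$): given non-meager Borel $A$, choose $n$ with $A\setminus\Sigma_n$ non-meager; since $(A\setminus\Sigma_n)\setminus(A\setminus B_n)\subseteq B_n\setminus\Sigma_n$ is meager, $A\setminus B_n$ is non-meager.

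Finally, (2) is immediate from (1): were the closed set $\bigcap_n B_n$ to have nonempty interior $W$, then $W$ would be a non-meager Borel set with $W\setminus B_n=\emptyset$ for every $n$, contradicting (1). And (4) is the standard compactness argument: when $X$ is compact the decreasing sets $B_n$ are compact, so for any open $U\supseteq\bigcap_n B_n$ the compact sets $B_n\setminus U$ decrease to $\emptyset$, and hence $B_n\setminus U=\emptyset$ for some $n$ by the finite intersection property. I expect the main obstacle to be the regularization step---ensuring the representatives $U_n$ inherit monotonicity and that property (3) survives the replacement of $\Sigma_n$ by $B_n$---whereas the $\I$-ccc property and the vacuousness of the second alternative in (b) are the observations that let the meager ideal slot into the general framework.
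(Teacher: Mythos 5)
Your proof is correct and follows essentially the same route as the paper: apply Corollary~\ref{c:main2}(6) to the $\I$-winning, $\I$-ccc family of Borel sets and then replace the Borel sets $\Sigma_n$ by regular closed sets with meager symmetric difference. The paper defines $B_n$ directly as the set of points at which $A_n$ is locally non-meager (which coincides with your $\overline{U_n}$), and it leaves implicit both the elimination of the second alternative in clause (b) and the verification that properties (1)--(3) survive the regularization, which you carry out correctly.
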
 

\begin{proof}  By Theorem~\ref{t:Polishable} and Propositions~\ref{p:Borel2} and \ref{p:Borel}, the family $\mathbf\Delta^1_1(X)$ of all Borel subsets of $X$ is $\I$-winning. Since each Borel subset of the Polish space $X$ has the Baire property, the family $\mathbf \Delta^1_1(X)$ is $\I$-ccc and hence $\I$-Lindel\"of. By Theorem~\ref{t:main-W}(6),
there exists a decreasing sequence of Borel sets $(A_n)_{n\in\w}$ such that 
\begin{enumerate}
\item[(i)] for any non-meager Borel set $A\subseteq X$ there exists $n\in\w$ such that $A\setminus A_n$ is non-meager;
\item[(ii)] for any $n\in\w$ and $s\in 2^n$ the set $\bigcup\J_s\setminus A_n$ contains no non-meager Borel subsets of $X$.
\end{enumerate}

For every $n\in\w$ let $B_n$ be the set of all points $x\in X$ such that for every neighborhood $O_x\subseteq X$ of $x$ the intersection $O_x\cap A_n$ is not meager in $X$. It is easy to see that the set $B_n$ is regular closed and $A_n\triangle B_n$ is meager. The monotonicity of the sequence $(A_n)_{n\in\w}$ implies that the sequence $(B_n)_{n\in\w}$ is decreasing. The properties (i) and (ii) imply the properties (1) and (3), respectively.
The property (2) follows from (1) and the closedness of the sets $B_n$. 

 If $X$ is compact, then the condition (4) follows from Theorem 3.10.2(3) in \cite{Eng}.
\end{proof} 

\section{Marczewski--Burstin representations}\label{s:mb}

In this section we apply Theorem~\ref{t:main} to families and ideals that admit a Marczewski--Burstin representation. Such representations were studied e.g. in \cite{BBC}, \cite{BBK}. 

\begin{definition} Given any family $\F$ of nonempty subsets of a set $X$, consider the families
$$	\mathcal{S}_0(\F)\defeq\{A\subseteq X: \forall F\in \mathcal{F}\quad\F(F\setminus A)\ne\emptyset\}$$
and
$$\mathcal{S}(\mathcal{F})\defeq\{A\subseteq X: \forall F\in \mathcal{F}\quad\F(F\cap A)\cup\F(F\setminus A)\ne\emptyset\}.$$
\end{definition}
Note that $\mathcal S(\mathcal F)$ is an algebra of sets and $\mathcal S_0(\F)$ is an ideal on $X$.

\begin{definition} Let $\mathcal B$ be an algebra of subsets of a set $X$ and $\I\subseteq\mathcal B$ be an ideal. The pair $(\mathcal B,\I)$ is defined to have a {\em Marczewski--Burstin representation} (briefly, an {\em MB-representation}) if there a family $\F$ of nonempty subsets of $X$ such that $$\mathcal{S}_0(\F)=\mathcal{I} \text{ and } \mathcal{S}(\mathcal{F})=\mathcal{B}.$$
\end{definition}

\begin{example} 
	\begin{enumerate}
		\item For the family $\F$ of closed sets of nonzero Lebesgue measure on the real line, the ideal $\s_0(\F)$ coincides with the $\sigma$-ideal $\mathcal N$ of Lebesgue null sets and the algebra $\s(\F)$ coincides with the $\sigma$-algebra of Lebesgue measurable sets in $\IR$.
		\item For the family of nonmeager $G_\delta$-subsets of $\IR$, the ideal $\s_0(\F)$ coincides with the $\sigma$-ideal $\M$  of meager subsets of $\IR$ and $\s(\mathcal F)$ coincides with the $\sigma$-algebra of subsets with the Baire property in $\IR$.
		\item For the family $\F$ of uncountable compact subsets of $\IR$ the ideal $\s_0(\F)$ coincides with the Marczewski $\sigma$-ideal $s_0$ and the algebra $\s(\F)$ with the $\sigma$-algebra $s$ of Marczewski measurable sets, see \cite{Marczewski}.
	\end{enumerate}
\end{example}

\begin{remark} Some examples algebras and ideals without MB-representations can be found in \cite{BBC} and \cite{BBK}.
\end{remark}


\begin{theorem}\label{MB}
	Let $\F$ be a family of nonempty Borel subset in a Polish space $X$ such that $\s_0(\F)$ is a $\sigma$-ideal and the algebra $\s(\F)$ contains all Borel subsets of $X$. Any point-finite family $\J\subseteq\s_0(\F)$ with $\bigcup\J\notin\s_0(\F)$ contains a subfamily $\J'$ such that $\bigcup\J'\notin\s(\F)$.
\end{theorem}

\begin{proof} Applying Theorem~\ref{t:main-B} to the $\sigma$-ideal $\I=\s_0(\F)$, we can find a subfamily $\J'\subseteq\J$ and a Borel set $B$ in $X$ such that the set $B'\defeq B\cap\bigcup\J'$ does not belong to the ideal $\I$ and contains no $\I$-positive Borel sets.  It follows from  $B'\notin \I=s_0(\F)$ that $\F(F\setminus B')=\emptyset$ for some $F\in\F$. Since $B\in\s(\F)$, there exists $E\in\F$ such that $E\subseteq F\cap B$ or $E\subseteq F\setminus B$. The second case is impossible as $B'\subseteq B$ and $\F(F\setminus B)\subseteq\F(F\setminus B')=\emptyset$. So, $E\subseteq F\cap B$ and $$\textstyle E\setminus\bigcup\J'=E\cap F\cap B\setminus \bigcup\J'\subseteq F\cap B\setminus\bigcup\J'=F\cap B\setminus(B\cap\bigcup\J')\subseteq  F\setminus B',$$ which implies $\F(E\setminus\bigcup\J')\subseteq\F(F\setminus B')=\emptyset$.

Assuming that $\bigcup\J'\in\s(\F)$, we can find a set $H\in\F$ such that $H\subseteq E\cap\bigcap\J'$ or $H\subseteq E\setminus\bigcap\J'$. The second case is impossible as $\F(E\setminus\bigcup\J')=\emptyset$. So, $H\subseteq E\cap\bigcap\J'$. Since the set $B\cap\bigcup\J'\supseteq E\cap\bigcap\J'$ contains no $\I$-positive Borel subsets, the set $H\in\F$ belongs to the ideal $\I=\s_0(\F)$, which is not possible as $\F\cap\s_0(\F)=\emptyset$. This contradiction shows that $\bigcup\J'\in\s(\F)$.
\end{proof}

Every perfect set in a Polish space can be represented as the body of a perfect tree. This observation alows us to modify the classical construction of Marczewski ideal   $s_0$ and a corresponding algebra of $s$-measurable sets and obtain similar pairs for various classes of trees. Let us recall some examples.

\begin{definition}\label{trees}	A tree $T\se \w^{<\w}$ is called
	\begin{itemize}
		\item  {\em perfect} or {\em Sacks} if $(\forall \sigma\in T)(\exists \tau\in T)(\sigma\subseteq \tau\land (\exists n\neq m)( \tau^{\frown} n\in T \land \tau^{\frown} m\in T))$; 
		\item {\em superperfect} or {\em Miller} if $(\forall \sigma\in T)(\exists \tau\in T)(\sigma\subseteq \tau\land (\exists^\infty n)( \tau^{\frown}n\in T))$;
		\item {\em Laver} if  $(\exists \sigma \in T)(\forall \tau\in T)(\tau\se\sigma \lor (\sigma\se\tau \land (\exists^\infty n)( \tau^{\frown}n\in T)))$. 
		\end{itemize}
\end{definition}

For any tree $T\se \w^{<\w}$ define a body of $T$ as
$$
[T] = \{ x\in \w^\w:\; \forall n\in \w \;\; x{\restriction}n \in T \}.
$$
Now, define pairs
\begin{itemize}
	\item $(m,m_0) = (\s(\F),\s_0(\F))$ where $\F$ is a family of all bodies of Miller trees, 
	\item $(l,l_0) = (\s(\F),\s_0(\F))$ where $\F$ is a family of all bodies of Laver trees.
\end{itemize}

The ideals $s_0,\ m_0,\ l_0$ were studied e.g. in \cite{Marczewski}, \cite{GSS}, \cite{JMSS}, \cite{MRZ}.  The fusion argument ensures that each family $s_0,\ m_0,\ l_0$ forms a $\sigma$-ideal. Moreover, every analytic set is $s$-measurable, $m$-measurable and $l$-measurable (see \cite{Mi}).  Applying Theorem~\ref{MB} to the pairs $(s,s_0)$, $(m,m_0)$, $(l,l_0)$ we obtain the following corollary.

\begin{corollary} Let $(t,t_0)\in \{(s,s_0), (m,m_0), (l,l_0)\}.$
Any point-finite family $\mathcal{J} \subseteq t_0$ with $\bigcup\mathcal{J} \notin t_0$ contains  a subfamily $\mathcal{J}'\subseteq\mathcal{J}$ such that $\bigcup\mathcal{J}'\notin t$.
\end{corollary}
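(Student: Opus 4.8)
The plan is to reduce the statement to a direct application of Corollary~\ref{MB}. By the very definition of the pairs $(s,s_0)$, $(m,m_0)$ and $(l,l_0)$, each such pair $(t,t_0)$ is MB-representable by the family $\F$ of bodies $[T]$ of the corresponding (Sacks, Miller, or Laver) trees $T\se\w^{<\w}$, so that $t=\s(\F)$ and $t_0=\s_0(\F)$. Since the body $[T]$ of any tree $T\se\w^{<\w}$ is a closed, hence Borel, subset of the Baire space $\w^\w$, the defining family $\F$ consists of Borel sets. Thus, to invoke Corollary~\ref{MB} with $X=\w^\w$, $\mathcal B=t$ and $\I=t_0$, it remains only to verify the last hypothesis of that corollary: that the algebra $t=\s(\F)$ contains every Borel subset of $\w^\w$.

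Recall that a set $A\se\w^\w$ belongs to $\s(\F)$ precisely when for every $F\in\F$ there exists $H\in\F$ with $H\se F\cap A$ or $H\se F\setminus A$. First I would treat the Sacks case, where this follows at once from the perfect set property. Fix a Borel set $A$ and a body $F=[T]$ of a perfect tree; then $F$ is a nonempty perfect subset of $\w^\w$ and the sets $F\cap A$, $F\setminus A$ are Borel. By the perfect set property for Borel sets, if $F\cap A$ is uncountable then it contains a perfect set $H$, and every nonempty perfect subset of $\w^\w$ is the body of a perfect tree (take the tree $\{x{\restriction}n:x\in H,\ n\in\w\}$, which splits at arbitrarily high nodes because $H$ has no isolated points), so $H\in\F$ and $H\se F\cap A$; if $F\cap A$ is countable, then $F\setminus A$ is uncountable and the same argument yields $H\in\F$ with $H\se F\setminus A$. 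Hence every Borel (indeed every analytic) set lies in $s=\s(\F)$.

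For the Miller and Laver cases the inclusion ``Borel $\se\s(\F)$'' is the classical measurability of Borel sets with respect to the superperfect and Laver tree forcings: for any Borel $A$ and any Miller (resp.\ Laver) tree $T$ there is a Miller (resp.\ Laver) subtree $S\se T$ with $[S]\se A$ or $[S]\cap A=\emptyset$. This is obtained by the standard fusion arguments for these forcings (see \cite{GSS}, \cite{JMSS}), and it immediately gives $A\in\s(\F)$: given $F=[T]\in\F$, apply the dichotomy inside $T$ to produce $H=[S]\in\F$; then $H\se F\cap A$ in the first alternative and $H\se F\setminus A$ in the second. I expect this fusion step to be the main technical point, since—unlike the Sacks case—it cannot be deduced from the perfect set property alone and genuinely uses the combinatorics of infinitely splitting nodes.

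With the hypothesis $t\supseteq\{\text{Borel sets}\}$ verified in all three cases, Corollary~\ref{MB} applies directly: given a point-finite family $\J\se t_0$ with $\bigcup\J\notin t_0$, it produces a subfamily $\J'\se\J$ with $\bigcup\J'\notin t$, which is exactly the desired conclusion (here the symbol $\A$ in the statement is to be read as $\J$).
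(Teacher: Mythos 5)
Your proof is correct and follows the same route as the paper, whose entire argument is the single sentence ``apply Corollary~\ref{MB} to the pairs $(s,s_0)$, $(m,m_0)$, $(l,l_0)$.'' The extra work you do --- checking that the bodies of trees are closed (hence Borel) and that each algebra $t=\s(\F)$ contains all Borel sets via the perfect set property (Sacks) and fusion (Miller, Laver) --- is exactly the verification of the hypotheses of Corollary~\ref{MB} that the paper leaves implicit, and you also correctly read the typo $\mathcal J'\subseteq\mathcal A$ as $\mathcal J'\subseteq\mathcal J$.
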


Now, let us consider the space $[\w]^\w$ endowed with the {\em Ellentuck topology}, which is generated by the base consisting of the sets  
$$
[a,A]=\{ B\in[\w]^\w:\; a\se B\se a\cup (A\setminus[0,\max a])\}
$$
where $a\in[\w]^{<\w}$ and $A\in[\w]^\w$.

A subset $X\se [\w]^\w$ is defined to be {\em completely Ramsey} if for any $a\in[\w]^{<\w}$ and $A\in[\w]^\w$ there exists $B\in[\w]^\w$ such that $B\se A$ and $[a,B]\se X$ or $[a,B]\cap X =\emptyset.$ 

A subset $X\se [\w]^\w$ is called {\em Ramsey null} if  for any $a\in[\w]^{<\w}$ and  $A\in[\w]^\w$ there exists $B\in[\w]^\w$ such that $B\se A$ and $[a,B]\cap X =\emptyset.$ 

By $RN$ and $CR$ we denote the families of all Ramsey null and completely Ramsey subsets of $[\w]^\w$, respectively. 

The pair $(CR,RN)$ is MB-represented by the family
$$
\F = \{ [a,A]:\; a\in [\w]^{<\w} \land A\in [\w]^\w \}.
$$

The following result was obtained earlier in \cite[Corollary 3.9]{Prikry}.

\begin{corollary}[Koumoullis, Prikry] 
Any point-finite family $\J$ of  Ramsey null sets with $\bigcup\J\notin RN$ contains  subfamily $\J'\subseteq\J$ whose union $\bigcup \J'$ is not completely Ramsey. 
\end{corollary}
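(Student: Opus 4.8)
The plan is to obtain the statement as an immediate application of Corollary~\ref{MB} to the pair $(CR,RN)$, which is MB-represented (as noted just above) by the family $\F=\{[a,A]:a\in[\w]^{<\w},\ A\in[\w]^\w\}$. To invoke Corollary~\ref{MB} I would verify its three standing hypotheses: that the ambient space is Polish, that the representing family $\F$ consists of Borel sets, and that the algebra $\mathcal B=CR$ contains all Borel subsets of the space.

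First I would record that $[\w]^\w$, identified with the set of infinite subsets of $\w$ inside the Cantor cube $2^\w$ via characteristic functions, is a Polish space (it is a $G_\delta$ subspace of $2^\w$). Next I would check that each Ellentuck basic set $[a,A]$ is Borel in this standard topology. Indeed, the defining conditions $a\se B$ and $B\se a\cup(A\setminus[0,\max a])$ are each an intersection of countably many coordinatewise (clopen) constraints of the form ``$n\in B$'' or ``$n\notin B$'', so $[a,A]$ is in fact \emph{closed} in $[\w]^\w$, hence Borel. Thus $\F$ consists of Borel sets, as required.

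The remaining, and genuinely essential, ingredient is that $CR=\s(\F)$ contains every Borel subset of $[\w]^\w$. This is precisely the classical Galvin--Prikry (or Silver) theorem: every Borel — indeed every analytic — subset of $[\w]^\w$ is completely Ramsey; equivalently, by the Ellentuck Theorem already cited, every Borel set has the Baire property with respect to the Ellentuck topology. With this in hand, all three hypotheses of Corollary~\ref{MB} are met, so applying that corollary to the point-finite family $\J\se RN$ with $\bigcup\J\notin RN$ produces a subfamily $\J'\se\J$ with $\bigcup\J'\notin CR$, i.e.\ $\bigcup\J'$ is not completely Ramsey.

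I expect the only nontrivial point to be the appeal to the Galvin--Prikry theorem guaranteeing $CR\supseteq$ Borel; the verification that $\F$ consists of Borel sets and that $[\w]^\w$ is Polish are routine unwindings of the definitions, and the final inference is a direct citation of the established MB-representation together with Corollary~\ref{MB}.
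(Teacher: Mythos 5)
Your proposal is correct and follows exactly the paper's route: the paper's proof is the single line ``apply Corollary~\ref{MB} to the family $\F$ of Ellentuck basic sets,'' and you do the same, merely making explicit the verification of its hypotheses (that $[\w]^\w$ is Polish, that each $[a,A]$ is Borel, and that $CR$ contains all Borel sets via Galvin--Prikry/Ellentuck). The added detail is a faithful unwinding of what the paper leaves implicit, not a different argument.
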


\begin{proof} By the famous Ellentuck Theorem  (see \cite{E} or \cite{Matet}), the ideal $RN$ coincides with the $\sigma$-ideal of meager sets in the Ellentuck topology on $[\w]^\w$ and the algebra $CR$ coincides with the $\sigma$-algebra of sets with the Baire property in the Ellentuck topology on $[\w]^\w$. 

Besides the Ellentuck topology, the space $[\w]^\w$ carries a natural Polish topology, generated by the base consisting of the sets $$[a,b]\defeq\{A\in[\w]^{<\w}:A\cap b=a\},$$
where $a\subseteq b$ are finite subsets of $\w$. Since this Polish topology is contained in the Ellentuck topology, the algebra $CR$ contains all Borel sets (in the Polish topology). Moreover, every set in the family 
$$
\F = \{ [a,A]:\; a\in [\w]^{<\w} \land A\in [\w]^\w \}
$$
is closed, and hence Borel, in the Polish topology on $[\w]^\w$.

Now we can  apply Theorem~\ref{MB} to the family $\F$ and finish the proof.
\end{proof}

\end{document}